\documentclass{amsart}

\usepackage{amsmath,amssymb,latexsym}
\usepackage{amsfonts,amstext,amsthm,amscd}
\usepackage{graphicx,comment}
\usepackage{enumerate}
\usepackage[all]{xy}
\usepackage{ifpdf}

\usepackage{float}
\usepackage{listings}
\usepackage{subcaption}
\usepackage{tikz}
\usepackage{pgfplots}
\pgfplotsset{compat=1.18}

\usepackage{hyperref}

\theoremstyle{plain}
\newtheorem{theorem}{Theorem}[section]

\newtheorem{lemma}[theorem]{Lemma}
\newtheorem*{lemma*}{Lemma}

\theoremstyle{definition}

\newtheorem{example}[theorem]{Example}

\theoremstyle{remark}
\newtheorem{remark}[theorem]{Remark}

\theoremstyle{plain}
\newtheorem{conjecture}[theorem]{Conjecture}

\newcommand{\abs}[1]{\left\vert#1\right\vert}
\DeclareMathOperator{\arccot}{arccot}
\DeclareMathOperator{\Hess}{Hess}

\newcommand{\C}{\mathbb C}

\newcommand{\imag}{\mathrm{Im}}
\newcommand{\R}{\mathbb R}
\newcommand{\tr}{\triangle}

\newcommand{\To}{\longrightarrow}

\title[On Gaussian Curvature of Graphs of Cubic Polynomials]{On Gaussian Curvature of Graphs \\ of Cubic Polynomials}

\author[B.S. Cabrera]{Baruc S. Cabrera}
\address{Divisi\'on de Ciencias Naturales y Exactas \\
Universidad de Guanajuato \\
Jalisco s/n, Mineral de Valenciana \\
Guanajuato, Gto. 36240, M\'exico}
\email{b.cabrera@ugto.mx}

\author[M. Cruz-L\'opez]{Manuel Cruz-L\'opez}
\address{Departamento de Matem\'aticas \\
Universidad de Guanajuato \\
Jalisco s/n, Mineral de Valenciana \\
Guanajuato, Gto. 36240, M\'exico}
\email{manuelcl@ugto.mx}

\author[O. Romero-Germ\'an]{Otto Romero-Germ\'an}
\address{CIMAT \\
Jalisco s/n, Mineral de Valenciana \\
Guanajuato, Gto. 36240, M\'exico}
\email{otto.romero@cimat.mx}

\subjclass[2020]{Primary 53A05, 53A07; Secondary 53C56}
\keywords{Gaussian curvature, cubic polynomials,
Siebeck-Marden ellipse, skew}

\date{\today}

\begin{document}

\begin{abstract}
The Gaussian curvature of the graphs of cubic polynomials $f(z)=(z-a)(z-b)(z-c)$, with triangle of zeros $\tr=\tr(a,b,c)$, is analyzed. We introduce the \emph{composed Pythagorean mean} $\mathtt{B}_{\tr}$, a combination of the three classical Pythagorean means, which plays a central role in the description of the critical points of the Gaussian curvature. When $\tr$ is not equilateral, the curvature has exactly five critical points: a global maximum at the barycenter, two local minima, and two saddle points, forming a rhombus whose vertices lie on the axes of the Siebeck-Marden ellipse of the triangle. We also initiate a comparative study of $\mathtt{B}_{\tr}$ with the skew of the triangle by describing a family of polynomials whose zeros determine a specific family of isosceles triangles.

The cubic case provides a concrete model strongly suggesting that the critical geometry of the Gaussian curvature of the graph can reveal both analytic and algebraic properties of the underlying holomorphic function.
\end{abstract}

\maketitle


\section[Introduction]{Introduction}
\label{introduction}

The study of curves and surfaces has been a central topic in real and complex differential geometry since the works of C.F. Gauss and B. Riemann. The Gaussian curvature plays a central role in this development, both for real surfaces and for holomorphic curves. The graph of a holomorphic function defined in the complex plane can be seen as a holomorphic curve. Its Gaussian curvature is a classical object of study, as presented in \cite{Gri}. The first nontrivial example exhibiting a rich geometrical behavior is the case of a cubic polynomial, whose basic geometry (see e.g., \cite{Mar}) is related to beautiful geometric objects such as the Siebeck-Marden ellipse, as well as an important triangle invariant: the skew. This work explores how classical differential-geometric invariants of the graph of a complex polynomial $f$, such as the Gaussian curvature, can reveal subtle algebraic and, geometric features encoded by the zeros of $f$.

Given a cubic polynomial $f=u+iv$ defined in the complex plane $\C$, its graph, $\Gamma(f)=\{(z,f(z))\in \C^2:z\in \C\}$, is a holomorphic curve. The coordinate functions of $f$ provide a natural parametrization of $\Gamma(f)$ as a regular surface immersed in $\R^4$, with the induced metric from $\R^4$. The formula for the Gaussian curvature of $\Gamma(f)$ at any point of the graph,
\[ K = -\frac{\Delta E}{2E^3} \]
was obtained in \cite{CR}. Here, $E$ is the coefficient of the first fundamental form and $\Delta E$ is the Laplacian of $E$.

In this article, we examine the Gaussian curvature of the graph of a cubic polynomial $f(z)=(z-a)(z-b)(z-c)$. Such a fundamental geometric invariant is completely described by the zeros of $f$, which can be located at any three noncollinear points $a,b,c$ in the complex plane.

If we write $A=a+b+c$, $B=ab+ac+bc$ and $C=abc$, $f(z)$ can be expressed as
\[ f(z) = z^3-Az^2+Bz-C. \]

Denote by $T=\tr(a,b,c)$ the triangle determined by the vertices at the points
$a,b,c\in \C$. Associated with $T$ define:
\begin{itemize}
\item $\mathtt{b}_\tr:=A/3$, the barycenter of $T$.
\item $\mathtt{B}_{\tr}:=\mathtt{b}_{\tr}^2 - B/3$.
\end{itemize}

We call the quantity $\mathtt{B}_{\tr}$ the \textsf{composed Pythagorean mean}, since it can be described as a nice combination of the three classical Pythagorean means: the arithmetic mean, the geometric mean, and the harmonic mean (see Section~\ref{skew_composed-pythagorean-mean} for details). This composed Pythagorean mean
\[ \mathtt{B}_{\tr} = \frac{1}{9} \big( a^2 + b^2 + c^2 - ab - ac - bc \big) \]
measures how far the triangle is from being equilateral. That is,

\begin{remark}
\label{T_equilateral}
\[ T \text{ is equilateral } \; \iff \; a^2+b^2+c^2 = ab+ac+bc \iff \; \mathtt{B}_\tr=0. \]
\end{remark}

The Gaussian curvature of the graph of $f$ depends explicitly on the zeros of $f$ and is given by
\[
K(z) = - C_0\frac{\abs{z-\mathtt{b}_\tr}^2}{\big[ (1/9)+\abs{(z-\mathtt{b}_\tr)^2-\mathtt{B}_\tr}^2 \big]^3}
\]
where $C_0:=2^3/3^4$.

An analysis of critical points calculations and classification is subsumed in the next (see Theorem~\ref{critical-points_theorem} and Theorem~\ref{type_critical-points}):
\smallskip

\noindent \textsf{First Main Theorem:}
If $f(z)=(z-a)(z-b)(z-c)$ is a cubic polynomial with nondegenerate and not equilateral triangle of zeros $T=\tr(a,b,c)$, then, the Gaussian curvature of the graph of $f$ has five critical points, corresponding to 1 maximum, 2 minima, and 2 saddle points. Furthermore, the 5 critical points depend on $\mathtt{b}_{\tr}$ and $\mathtt{B}_{\tr}$.

According to the Siebeck-Marden Theorem (see \cite{Mar}), the roots
$z_1=\mathtt{b}_{\tr} + \sqrt{\mathtt{B}_{\tr}}$ and
$z_2=\mathtt{b}_{\tr} - \sqrt{\mathtt{B}_{\tr}}$ of $f'(z)$ are the foci of the inscribed ellipse in $T$  tangent to the midpoints of the sides of $T$. The zero of the second derivative located at $z=\mathtt{b}_{\tr}$ is the midpoint of the segment $[z_1,z_2]$, i.e., it is the center of the ellipse and the barycenter of the triangle.

When $T$ is equilateral, the Gaussian curvature of the graph of $f$ has a circumference of critical points with center at $\mathtt{b}_\tr$ and radius $1/\sqrt[4]{45}$. This circumference is concentric with the Siebeck-Marden ellipse (a circumference, in this case).

When $T$ is not equilateral, i.e., $\mathtt{B}_\tr\neq 0$, we prove the following:

\smallskip
\noindent \textsf{Theorem \ref{coincidence_diagonals}}:
If $f(z)=(z-a)(z-b)(z-c)$ is any cubic polynomial with $\mathtt{B}_\tr\neq 0$, then, the five critical points of the Gaussian curvature of the graph of $f$ are located along both axes of the Siebeck-Marden ellipse. In particular, the barycenter corresponds to the center of the ellipse.

\smallskip
The Figure \ref{fig:scalene_intro}(A) shows the graph of the Gaussian curvature with the triangle of zeros in blue and the critical quadrilateral in green. Figure~\ref{fig:scalene_intro}(B) shows the triangle of zeros in blue with $\mathtt{B}_\tr\neq 0$. The critical points of the Gaussian curvature are in green. The Siebeck-Marden ellipse is shown in red, and its foci are shown in black. Two of the critical points are located on the focal axis. The distance from each focus to the nearest critical point is approximately $0.032$, which is why they appear to coincide in the figure.
\begin{figure}[!htbp]
\centering
\begin{subfigure}[b]{0.45\textwidth}
\centering
\includegraphics[scale=0.175]{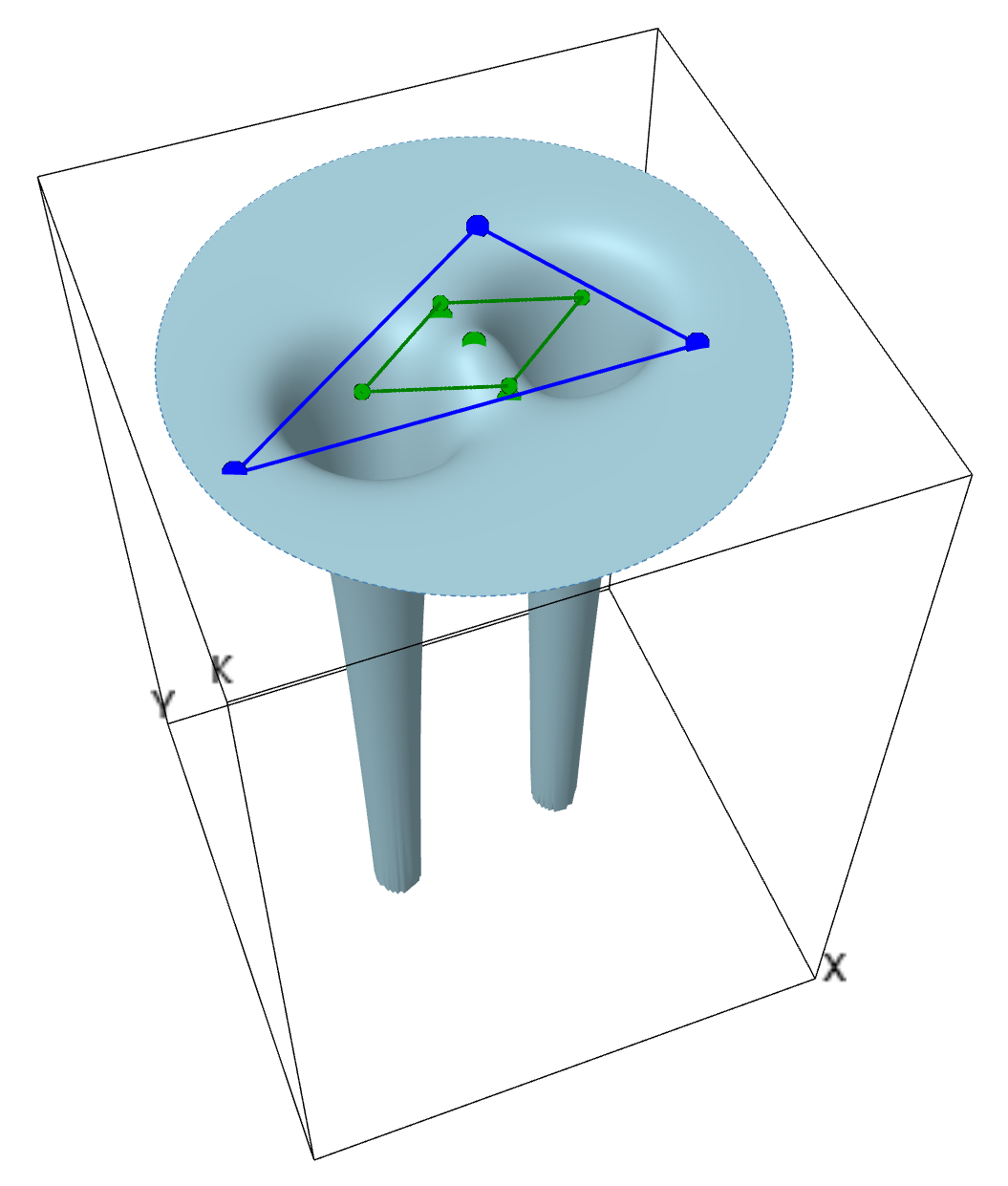}
\caption{}
\end{subfigure}
\begin{subfigure}[b]{0.45\textwidth}
\centering
\raisebox{1.5cm}{%
\resizebox{\linewidth}{!}{%
\begin{tikzpicture}
\begin{axis}[
    axis equal image,
    xmin=-1.45, xmax=1.45,
    ymin=-0.25, ymax=1.65,
    axis lines=center,
    xlabel={},
    ylabel={},
    ticks=none,
]
\addplot[thick,blue] coordinates {
    (-1.25,0)
    (1.25,0)
    (0.5,1.25)
    (-1.25,0)
};
\addplot[thick,solid,color=red,domain=0:360,samples=300,]
  ({0.16667 + 0.74904*cos(x)*cos(10.16157) - 0.40145*sin(x)*sin(10.16157) },
  {0.41667 + 0.74904*cos(x)*sin(10.16157) + 0.40145*sin(x)*cos(10.16157) });
\begin{scope}[
    shift={(axis cs:0,0)},
    x={(axis direction cs:1,0)},
    y={(axis direction cs:0,1)}]
\draw[blue,fill=blue] (-1.25,0) circle (.5ex);
\draw[blue,fill=blue] (1.25,0) circle (.5ex);
\draw[blue,fill=blue] (0.5,1.25) circle (.5ex);
\draw (-1.35,-0.15) node[right,black] {$a$};
\draw (1.15,-0.15) node[right,black] {$b$};
\draw (0.55,1.25) node[right,black] {$c$};
\draw[thick,color=orange] (-0.75,0.2522) -- (1.15,0.5929);
\draw[thick,color=orange] (0.26822,-0.15) -- (0.0352,1.15);
\draw[thick,color=green,densely dotted] (0.8213,0.5340) -- (0.2284,0.07213);
\draw[thick,color=green,densely dotted] (0.8213,0.5340) -- (0.1049,0.7612);
\draw[thick,color=green,densely dotted] (-0.4880,0.2993) -- (0.2284,0.07213);
\draw[thick,color=green,densely dotted] (-0.4880,0.2993) -- (0.1049,0.7612);
\draw[green,fill=green] (0.8213,0.5340) circle (.35ex);
\draw[green,fill=green] (-0.4880,0.2993) circle (.35ex);
\draw[green,fill=green] (0.2284,0.07213) circle (.35ex);
\draw[green,fill=green] (0.1049,0.7612) circle (.35ex);
\draw[black,fill=black] (0.7892,0.5283) circle (.4ex);
\draw[black,fill=black] (-0.4559,0.301) circle (.4ex);
\draw[red,fill=red] (0,0) circle (.35ex);
\draw[red,fill=red] (-0.375,0.625) circle (.35ex);
\draw[red,fill=red] (0.875,0.625) circle (.35ex);
\draw[black,fill=black] (0.1666,0.4166) circle (.4ex);
\end{scope}
\end{axis}
\end{tikzpicture}%
}%
}%
\caption{}
\end{subfigure}
\caption{Curvature and geometric configuration for a scalene triangle: (A) graph of the Gaussian curvature; (B) critical points and Siebeck--Marden ellipse for $\triangle(a,b,c)=\triangle\bigl(-\tfrac54,\tfrac54,\tfrac12+\tfrac{5i}{4}\bigr)$}
\label{fig:scalene_intro} 
\end{figure}

In the last part of this work, we carefully examine the quantity $\mathtt{B}_\tr$, and compare it with the \textsf{skew} of the triangle $T$, which measures how far the triangle is from being equilateral. In the equilateral case, $\mathrm{skew}(T)=1$ and $\mathtt{B}_\tr=0$.

The skew was introduced by J.H. Hubbard (see \cite{Hub}) for studying quasiconformality in the complex plane (see also \cite{AHH}). We study the Gaussian curvature of a family of polynomials of the form $f(z)=(z-it)(z+it)(z-\lambda)$ in Section~\ref{skew_composed-pythagorean-mean}, where $\lambda,t\in \R$ and $t>0$. In this case, the associated triangles $T_{\lambda,t}$ are isosceles and $\mathtt{B}_\tr$ is a real number. For this class of triangles we prove that
\[ \mathrm{skew}(T_{\lambda,t})^2 = 1 + C(\ell) \abs{\mathtt{B}_\tr}, \]
where $\ell$ is the smallest side of the triangle and $C(\ell)=9/\ell^2$.

In Figure \ref{skew_graph}, we plot the graphs of $\mathrm{skew}(T_\tau)^2$ (blue) and $1+\frac{9\abs{\mathtt{B}_\tr}}{\ell^2}$ (orange) for the family of triangles $T_\tau=\triangle(-1,1,\tau)$, with $\tau\in\C$ and $\imag(\tau)>0$. The curves in the $XY$-plane indicate the loci where two sides of $T_\tau$ have equal length, while the black point (in both surfaces) corresponds to an equilateral triangle. These images motivate the search for a more precise relationship between $\mathrm{skew}(T_\tau)$ and $\abs{\mathtt{B}_\tr}$, analogous to the one obtained for the family of isosceles triangles $T_{\lambda,t}$.
\begin{figure}[ht]
	\centering
	\includegraphics[scale=0.15]{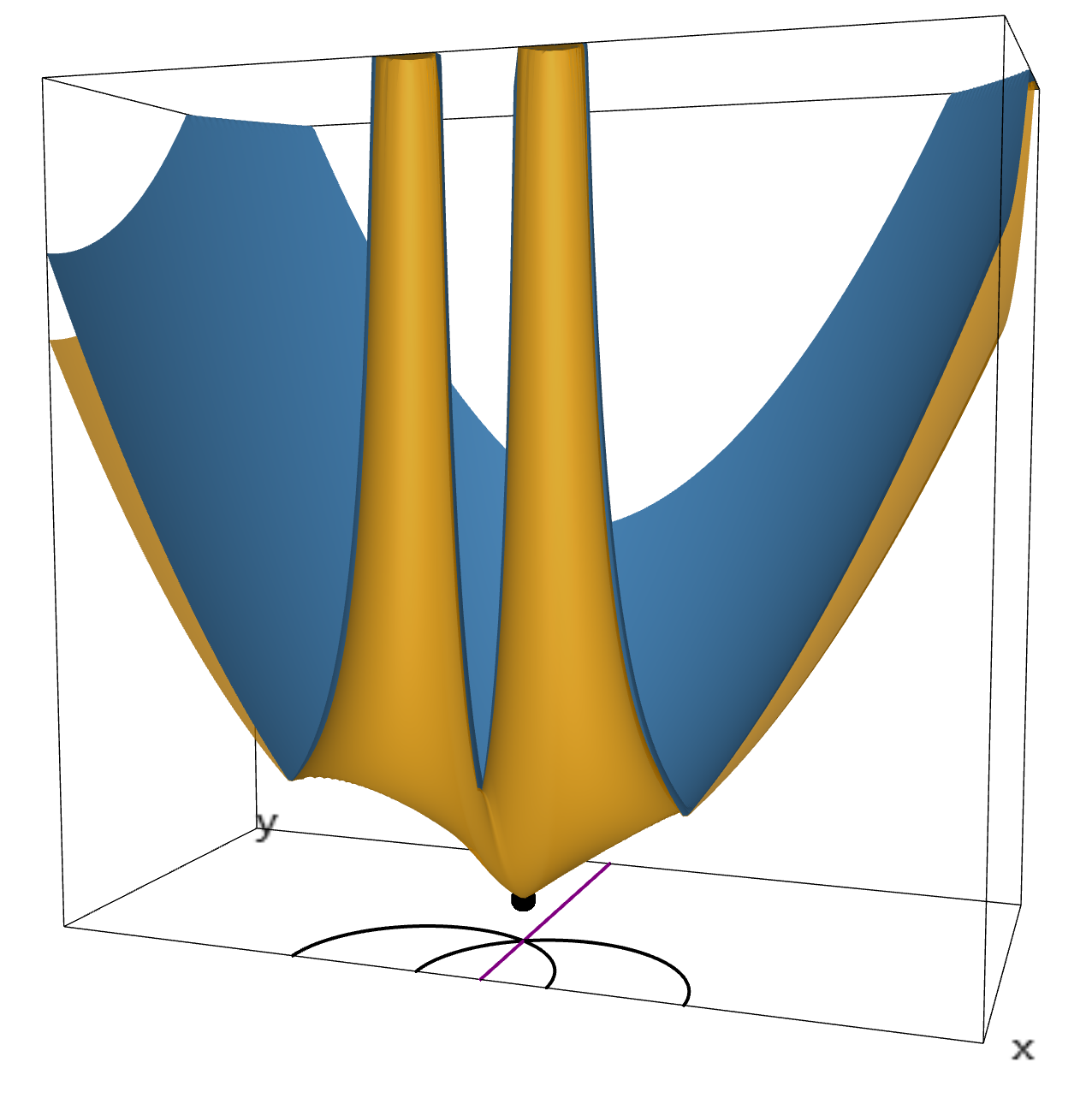}
    \caption{Graphs of $\operatorname{skew}(T_\tau)^2$ and $1+9\abs{\mathtt{B}_{\tr}}/\ell^2$ for $\operatorname{Im}(\tau)>0$}
	\label{skew_graph}
\end{figure}

\noindent \textbf{The Skew Conjecture \ref{skew_conjecture}:} Given a cubic polynomial $f(z)$ with nondegenerate triangle of zeros $T=\tr(a,b,c)$ in the complex plane, the skew of $T$ can be approximated by $\abs{\mathtt{B}_\tr}$, as
\[ \mathrm{skew}(T)^2 \approx 1 + C(\ell) \abs{\mathtt{B}_\tr}, \]
where $\ell$ is the smallest side of $T$ and the constant $C(\ell)$ depends only on $\ell$.

\smallskip
The study carried out in this work suggests further development of the theory in several aspects. On the one hand, the analysis of the Gaussian curvature can be extended to higher-degree polynomials, other types of holomorphic functions, or even meromorphic functions. On the other hand, deeper relations between the composed Pythagorean mean and the skew can be pursued. These considerations will be part of future investigations.

Section \ref{gaussian-curvature_cubic-polynomials} presents the basic formula for calculating the Gaussian curvature of a cubic polynomial, as well as some of its invariance properties of the curvature. Section~\ref{critical-points_gaussian-curvature} focuses on calculations of the critical points of the Gaussian curvature, and analyzes their nature. In Section~\ref{critical_quadrilaterals}, we study the different possibilities that appear in the plane when describing the quadrilaterals determined by the critical points of the curvature, as well as the Siebeck-Marden ellipse inscribed in the triangle determined by the zeros of the cubic polynomial. Finally, in Section~\ref{skew_composed-pythagorean-mean} we compare the composed Pythagorean mean  with the skew of the triangle of zeros of the polynomial.

\section[Gaussian Curvature of Graphs of Cubic Polynomials]{Gaussian Curvature of Graphs of Cubic Polynomials}
\label{gaussian-curvature_cubic-polynomials}

This section presents the basic formula to calculate the Gaussian curvature at points of the graph of a cubic polynomial, as well as some of its invariant properties.

\subsection[General Form of the Gaussian Curvature]{General Form of the Gaussian Curvature}
\label{general_gaussian-curvature}

Let $f:\Omega\To \C$ be a holomorphic function defined on a domain
$\Omega\subset \C$. We write
\[ f(x,y) = u(x,y) + i v(x,y),\quad (z=x+iy\in \Omega) \]
where $u,v:\Omega\To \R$ are infinitely differentiable functions. The \textsf{graph} of $f$ is by definition the set
\[ \Gamma(f):= \left\{ (z,f(z))\in \C^2 : z\in \Omega \right\}. \]

This description enables us to provide complex and real parametrizations of $\Gamma(f)$. That is, as a holomorphic curve, $\Gamma(f)$ admits the parametrization
\[ \Psi(z) = (z,f(z)) \qquad (z\in \Omega) \]
and, as a subset of $\R^4$, $\Gamma(f)$ is a real surface which admits the parametrization $\Psi:\Omega\To \R^4$ given by
\[ \Psi(x,y) = (x,y,u(x,y),v(x,y)). \]

Since $\Gamma(f)$ is a differentiable surface, we can endow it with the induced Riemannian metric from $\R^4$. The Gaussian curvature $K$ of the graph of $f$ at the point $(z,f(z))\in \Gamma(f)$ is given by
\[ K = -\frac{\Delta E}{2E^3}, \]
where $E(x,y)=1+u_x^2+v_x^2$.

\subsection[Cubic Polynomials]{Cubic Polynomials}
\label{cubic_gaussian-curvature}

Suppose that $a,b,c$ are any three noncollinear points in the complex plane and denote by $T=\tr(a,b,c)$ the triangle determined by the vertices at
$a,b,c\in \C$. Define the cubic polynomial
 \[ f(z) = (z-a)(z-b)(z-c). \]
If $A=a+b+c$, $B=ab+ac+bc$ and $C=abc$, then $f$ can be written as
\[ f(z) = z^3 - Az^2 + Bz - C, \]
with derivative given by
\[ f'(z) = 3z^2-2Az+B = 3[(z-\mathtt{b}_{\tr})^2-\mathtt{B}_{\tr}], \]
where $\mathtt{b}_{\tr}=A/3$ is the barycenter of $T$, and $\mathtt{B}_{\tr}=\mathtt{b}_{\tr}^2 - B/3$. Also,
\begin{equation}
\label{focal_points}
f'(z)=0 \quad \text{if, and only if,}\quad z=\mathtt{b}_{\tr}\pm \sqrt{\mathtt{B}_{\tr}}.
\end{equation}

The second derivative of $f$ is
\[ f''(z)=6z-2A=6(z-\mathtt{b}_{\tr}). \]

The Gaussian curvature at any point of the graph of $f$ is given by:
\[
K(z) = - \frac{2^3\cdot \abs{z-\mathtt{b}_{\tr}}^2}{3^4\big[ (1/9) + \abs{(z-\mathtt{b}_{\tr})^2-\mathtt{B}_{\tr}}^2 \big]^3}.
\]

When $\mathtt{b}_\tr=0$, we obtain the simpler formula
\[
K(z) = - C_0\frac{\abs{z}^2}{\bigg[ (1/9) + \abs{z^2 - \mathtt{B}_{\tr}}^2 \bigg]^3},
\]
where $C_0=2^3/3^4$.

\subsection[Invariance Properties of the Gaussian Curvature]{Invariance Properties of the Gaussian Curvature}
\label{invariance_properties_gaussian-curvature}

Consider a translation by $t$, $t\in \C$, of the triangle $T$ as $\tr_{+t}:=\tr(a+t,b+t,c+t)$. Also, for $\theta\in [0,2\pi]$, consider a rotation by $\theta$,
$\tr_\theta:=\tr\big(e^{i\theta}a,e^{i\theta}b,e^{i\theta}c\big)$, of $T$.
\begin{lemma}
\label{B_invariant}
The following relations hold:
\begin{align*}
\mathtt{b}_{\tr_{+t}} &= \mathtt{b}_{\tr} + t, \\
\mathtt{b}_{\tr_\theta} &= e^{i\theta} \mathtt{b}_{\tr},
\end{align*}
and
\begin{align*}
\mathtt{B}_{\tr_{+t}} &= \mathtt{B}_{\tr}, \\
\mathtt{B}_{\tr_\theta} &= e^{2i\theta} \mathtt{B}_{\tr}.
\end{align*}
\end{lemma}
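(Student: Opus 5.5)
The plan is to verify all four identities by direct computation from the definitions $\mathtt{b}_\tr = A/3$ and
\[
\mathtt{B}_\tr = \mathtt{b}_\tr^2 - B/3 = \tfrac19\big(a^2+b^2+c^2-ab-ac-bc\big),
\]
where the second expression for $\mathtt{B}_\tr$ was recorded in the introduction. Using that symmetric quadratic form rather than the definition $\mathtt{b}_\tr^2 - B/3$ is the key simplification, since it makes the behaviour under translation and rotation transparent.

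For the barycenter, the map $(a,b,c)\mapsto A=a+b+c$ is linear. Translating each vertex by $t$ adds $3t$ to $A$, so $\mathtt{b}_{\tr_{+t}}=\mathtt{b}_\tr+t$. Multiplying each vertex by $e^{i\theta}$ multiplies $A$ by $e^{i\theta}$, so $\mathtt{b}_{\tr_\theta}=e^{i\theta}\mathtt{b}_\tr$.

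For $\mathtt{B}_\tr$, I will rewrite the quadratic form as
\[
a^2+b^2+c^2-ab-ac-bc=\tfrac12\big[(a-b)^2+(b-c)^2+(c-a)^2\big].
\]
In this form $\mathtt{B}_\tr$ depends only on the differences of the vertices. Differences are unchanged by a common translation, which gives $\mathtt{B}_{\tr_{+t}}=\mathtt{B}_\tr$. Under the rotation each difference is multiplied by $e^{i\theta}$, so each squared difference is multiplied by $e^{2i\theta}$, which gives $\mathtt{B}_{\tr_\theta}=e^{2i\theta}\mathtt{B}_\tr$. Equivalently, $\mathtt{B}_\tr$ is a homogeneous quadratic in $(a,b,c)$, and that alone already yields the rotation rule.

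There is no real obstacle here. The only point needing care is the translation invariance of $\mathtt{B}_\tr$, which is not evident from the definition $\mathtt{b}_\tr^2-B/3$ because both terms change under translation. Writing $\mathtt{B}_\tr$ in the difference form above settles it. One could also verify it directly: $B$ becomes $B+2tA+3t^2$ while $\mathtt{b}_\tr^2$ becomes $\mathtt{b}_\tr^2+2t\mathtt{b}_\tr+t^2$, and the changes cancel since $2tA/3=2t\mathtt{b}_\tr$.
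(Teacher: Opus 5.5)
Your proof is correct and matches the paper's approach. The paper states this lemma without proof, treating it as an immediate check from the definitions $\mathtt{b}_\tr=A/3$ and $\mathtt{B}_\tr=\frac19(a^2+b^2+c^2-ab-ac-bc)$. Your computation, in particular rewriting $\mathtt{B}_\tr=\frac1{18}\big[(a-b)^2+(b-c)^2+(c-a)^2\big]$ to make the translation invariance and the factor $e^{2i\theta}$ under rotation evident, supplies exactly that omitted verification.
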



The Gaussian curvature of the graphs of cubic polynomials determined by the triangles $\tr_{+t}$ and $\tr_\theta$ will be denoted by $K_{+t}$ and $K_\theta$, respectively.

\begin{lemma}
\label{symmetry_curvature}
The following relations hold:
\begin{enumerate}[(a)]
\item $K_{+t}(z) = K(z-t), \; z,t\in \C$.
\item $K_\theta(z) = K(e^{-i\theta}z), \; z\in \C, \theta\in [0,2\pi]$.
\item Write $\mathtt{B}_\tr=B_1+iB_2$ with $B_2\neq 0$. Then, there exists
$\theta\in (0,2\pi)\setminus \{\frac{\pi}{2},\frac{3\pi}{2}\}$ such that the equality  $\mathtt{B}_{\tr_\theta}=B_1^\theta$ hold for triangle $\tr_\theta$. That is, $B_2^\theta=0$ and $B_1^\theta \neq 0$ for triangle $\triangle_\theta$.
\end{enumerate}
\end{lemma}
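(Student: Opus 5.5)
The plan is to write the curvature through the single function of $z$ given by the general formula of Subsection~\ref{cubic_gaussian-curvature},
\[
K(z) = -C_0\,\frac{\abs{z-\mathtt{b}_{\tr}}^2}{\big[(1/9)+\abs{(z-\mathtt{b}_{\tr})^2-\mathtt{B}_{\tr}}^2\big]^3}.
\]
Each part then follows by tracking how $\mathtt{b}_\tr$ and $\mathtt{B}_\tr$ transform, using Lemma~\ref{B_invariant}. So $K$ depends on the triangle only through the pair $(\mathtt{b}_\tr,\mathtt{B}_\tr)$, and the whole proof is substitution.

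For (a), Lemma~\ref{B_invariant} gives $\mathtt{b}_{\tr_{+t}}=\mathtt{b}_\tr+t$ and $\mathtt{B}_{\tr_{+t}}=\mathtt{B}_\tr$. Substituting into the formula, the quantity $z-\mathtt{b}_{\tr_{+t}}$ equals $(z-t)-\mathtt{b}_\tr$, and $\mathtt{B}$ is unchanged. Hence $K_{+t}(z)=K(z-t)$.

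For (b), Lemma~\ref{B_invariant} gives $\mathtt{b}_{\tr_\theta}=e^{i\theta}\mathtt{b}_\tr$ and $\mathtt{B}_{\tr_\theta}=e^{2i\theta}\mathtt{B}_\tr$. Put $w=e^{-i\theta}z$. Then
\[
\abs{z-\mathtt{b}_{\tr_\theta}}=\abs{e^{i\theta}}\,\abs{w-\mathtt{b}_\tr}=\abs{w-\mathtt{b}_\tr}.
\]
Similarly
\[
\abs{(z-\mathtt{b}_{\tr_\theta})^2-\mathtt{B}_{\tr_\theta}}=\abs{e^{2i\theta}}\,\abs{(w-\mathtt{b}_\tr)^2-\mathtt{B}_\tr}=\abs{(w-\mathtt{b}_\tr)^2-\mathtt{B}_\tr}.
\]
Both the numerator and the denominator are therefore unchanged, so $K_\theta(z)=K(e^{-i\theta}z)$.

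For (c), the idea is to rotate so that $\mathtt{B}$ becomes real. Write $\mathtt{B}_\tr=re^{i\varphi}$ with $r=\abs{\mathtt{B}_\tr}>0$; this is nonzero because $B_2\neq 0$. Since $B_2\neq0$, we have $\varphi\notin\{0,\pi\}$ modulo $2\pi$.

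\begin{itemize}
\item By Lemma~\ref{B_invariant}, $\mathtt{B}_{\tr_\theta}=re^{i(\varphi+2\theta)}$, which is real exactly when $2\theta\equiv-\varphi \pmod{\pi}$.
\item This gives four solutions in $[0,2\pi)$: $\theta\equiv -\varphi/2 \pmod{\pi/2}$.
\item None of them is $0$, $\pi/2$, $\pi$ or $3\pi/2$. Any such value would force $2\theta\in\pi\mathbb Z$, hence $\varphi\in\pi\mathbb Z$, contradicting $B_2\neq0$.
\item Choose one of these four solutions as $\theta$; it lies in $(0,2\pi)\setminus\{\pi/2,3\pi/2\}$. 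Then $B_2^\theta=0$.
\item Also $B_1^\theta=\pm r\neq 0$, since rotation preserves the modulus.
\end{itemize}

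There is no real obstacle. The only point needing care is the bookkeeping in (c): we must check that the chosen angle avoids the excluded values, including $\theta=\pi$, which is also excluded from the stated open interval. The argument above handles this, since every solution is an odd multiple of $\pi/4$-shifted angle that never lands in $\tfrac{\pi}{2}\mathbb Z$.
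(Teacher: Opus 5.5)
Your proof is correct and follows the paper's route. Parts (a) and (b) substitute the transformation rules of Lemma~\ref{B_invariant} into the closed formula for $K$, and part (c) chooses $\theta$ so that $e^{2i\theta}\mathtt{B}_\tr$ is real (the paper writes this as $\theta=\frac12\arccot(-B_1/B_2)$); you additionally check $B_1^\theta\neq 0$ and $\theta\notin\frac{\pi}{2}\mathbb{Z}$.

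Only your closing remark is off: the solutions $-\varphi/2+k\pi/2$ are not in general odd multiples of $\pi/4$, and $\pi$ does lie in $(0,2\pi)\setminus\{\frac{\pi}{2},\frac{3\pi}{2}\}$. Neither slip affects the bullet-point argument.
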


\begin{proof}
\begin{enumerate}
\item[(a)] By Lemma \ref{B_invariant} we obtain:
\begin{align*}
K_{+t}(z) &= - C_0 \frac{\abs{z-\mathtt{b}_{\tr_{+t}}}^2}{\big[ (1/9) +
\abs{(z-\mathtt{b}_{\tr_{+t}})^2-\mathtt{B}_{\tr_{+t}}}^2 \big]^3} \\
&= - C_0 \frac{\abs{z-\mathtt{b}_{\tr}-t}^2}{\big[ (1/9) +
\abs{(z-\mathtt{b}_{\tr}-t)^2-\mathtt{B}_{\tr}}^2 \big]^3}.
\end{align*}
Hence,
\begin{align*}
K_{+t}(z+t) &= - C_0 \frac{\abs{z-\mathtt{b}_{\tr}}^2}{\big[ (1/9) + \abs{(z-\mathtt{b}_{\tr})^2-\mathtt{B}_{\tr}}^2 \big]^3} \\
&= K(z).
\end{align*}

\item[(b)] Again, by Lemma \ref{B_invariant} we obtain:
\begin{align*}
K_\theta(z) &= - C_0 \frac{\abs{z-\mathtt{b}_{\tr_\theta}}^2}{\big[ (1/9) + \abs{(z-\mathtt{b}_{\tr_\theta})^2-\mathtt{B}_{\tr_\theta}}^2 \big]^3} \\
&= - C_0 \frac{\abs{z-e^{i\theta}\mathtt{b}_{\tr}}^2}{\big[ (1/9) + \abs{(z-e^{i\theta}\mathtt{b}_{\tr})^2-e^{2i\theta}\mathtt{B}_{\tr}}^2 \big]^3} \\
&= - C_0 \frac{\abs{e^{-i\theta}z-\mathtt{b}_{\tr}}^2}{\big[ (1/9) + \abs{(e^{-i\theta}z-\mathtt{b}_{\tr})^2-\mathtt{B}_{\tr}}^2 \big]^3} \\
&= K(e^{-i\theta}z).
\end{align*}

\item[(c)] By Lemma \ref{B_invariant}, $\mathtt{B}_{\tr_\theta}=e^{2i\theta} \mathtt{B}_{\tr}$. Hence,
\[ \sin(2\theta)\cdot B_1 + \cos(2\theta)\cdot B_2 = B_2^\theta, \]
so we can choose
\[ \theta=\frac{1}{2} \arccot \bigg(-\frac{B_1}{B_2}\bigg). \]
\end{enumerate}
\end{proof}

\section[Critical Points of the Gaussian Curvature]{Critical Points of the Gaussian Curvature}
\label{critical-points_gaussian-curvature}

In this section, we compute the critical points of the Gaussian curvature of the graph of a cubic polynomial and determine whether they are maxima, minima, or saddle points.

Suppose that $w:=x+iy$, with $x,y \in \R$. Setting $w=z-\mathtt{b}_{\tr}$ and writing $\mathtt{B}_{\tr}=B_1+iB_2$, we can express $K(x,y)$ in the form
\begin{align*}
K(x,y)
&= -C_0\frac{\abs{w}^2}{\bigg[ (1/9)+\abs{(x^2-y^2-B_1)+i(2xy-B_2)}^2 \bigg]^3} \\
&= - C_0\frac{\abs{w}^2}{\bigg[ (1/9) + (x^2-y^2-B_1)^2 + (2xy-B_2)^2 \bigg]^3} \\
&= - C_0\abs{w}^2\cdot B(x,y),
\end{align*}
where $B(x,y): = \frac{1}{A(x,y)^3}$ and
\[ A(x,y): = (1/9) + (x^2-y^2-B_1)^2 + (2xy-B_2)^2. \]

The first partial derivative with respect to $x$ is given by:
\begin{equation}
\label{Kx}
K_x(x,y) = \frac{-C_0}{A(x,y)^4} \bigg[ 2xA(x,y) - 3(x^2+y^2)A_x(x,y) \bigg].
\end{equation}

So,
\[ K_x(x,y) = 0\quad \Longleftrightarrow \quad 2xA(x,y)-3(x^2+y^2)A_x(x,y) = 0. \]

Similarly, the first partial derivative with respect to $y$ is:
\begin{equation}
\label{Ky}
K_y(x,y) = \frac{-C_0}{A(x,y)^4} \bigg[ 2yA(x,y) - 3(x^2+y^2)A_y(x,y) \bigg].
\end{equation}

So,
\[ K_y(x,y) = 0\quad \Longleftrightarrow \quad 2yA(x,y) - 3(x^2+y^2)A_y(x,y) = 0. \]

In order to obtain the critical points, we must examine the system:
\begin{align}
2xA(x,y) - 3(x^2+y^2)A_x(x,y) &= 0, \label{critical-points_loci1}\\
2yA(x,y) - 3(x^2+y^2)A_y(x,y) &= 0. \label{critical-points_loci2}
\end{align}

\subsection[Geometric Analysis of Critical Points: Symmetries of the Gaussian Curvature]{Geometric Analysis of Critical Points: Symmetries of the Gaussian Curvature}
\label{critical-points_analysis}

For any $(x,y)\in \R^2$, $A(x,y)$ is symmetric with respect to the origin and $A_x,A_y$ are antisymmetric, i.e.,
\[ A_x(-x,-y)=-A_x(x,y)\quad \text{and}\quad A_y(-x,-y)=-A_y(x,y). \]
That means that, if $(x,y)\in \R^2$ is a solution of the system of equations \eqref{critical-points_loci1} -- \eqref{critical-points_loci2}, then
$(-x,-y)$ is also a solution of the system.

It is clear that $(0,0)$ is a solution of the system of equations. If $(x,y)\neq (0,0)$, then
\begin{equation}
A_x(x,y) = \frac{2xA(x,y)}{3(x^2+y^2)}, \qquad
A_y(x,y) = \frac{2yA(x,y)}{3(x^2+y^2)}.
\label{Ax_Ay}
\end{equation}

Since $A(x,y)\neq 0$ for any $(x,y)\in \R^2$, the equation is equivalent to
\[ \frac{A_x(x,y)}{x} = \frac{A_y(x,y)}{y}, \]
or,
\[ yA_x(x,y) - xA_y(x,y) = 0. \]

Recall that
\[ A(x,y) = (1/9) + ([x^2-y^2]-B_1)^2 + (2xy-B_2)^2, \]
that is,
\[ A(x,y) = (1/9) + (x^2 + y^2)^2 - 2[B_1x^2 + 2B_2xy - B_1y^2] + (B_1^2 + B_2^2). \]

So,
\[ yA_x(x,y) - xA_y(x,y) = 4[ B_2x^2 - 2B_1xy - B_2y^2 ]. \]

Hence, the critical points are on the degenerate conic (two lines)
\begin{equation}
\label{degenerated_conic}
B_2x^2 - 2B_1xy - B_2y^2 = 0.
\end{equation}

When $B_1\neq 0$ and $B_2=0$, this equation simplifies to $xy=0$, i.e., the coordinate axes.

\subsection[Computation of Critical Points]{Computation of Critical Points}
\label{computation_critical-points}

\subsubsection[Case: $B_1=B_2=0$]{Case: $B_1=B_2=0$}
\label{equilateral_triangles}

When $B_1=B_2=0$,
\[ A(x,y)=1/9+(x^2+y^2)^2 \]
and
\[ A_x(x,y)= 4x(x^2+y^2), \quad A_y(x,y)= 4y(x^2+y^2). \]

The system \eqref{critical-points_loci1} -- \eqref{critical-points_loci2} reduces to
\begin{align*}
2xA(x,y) - 12x(x^2+y^2)^2 &= 0, \\
2yA(x,y) - 12y(x^2+y^2)^2 &= 0,
\end{align*}
and has a trivial solution at $x=y=0$.

When $x\neq 0$ and $y\neq 0$, the previous system is equivalent to a single equation
\[ A(x,y) - 6(x^2+y^2)^2 = 0, \]
whose solution set is the circumference with center at $(0,0)$ and radius $1/\sqrt[4]{45}$, i.e., the locus of points $(x,y)\in \R^2$ such that
\[ \sqrt{x^2+y^2} = \frac{1}{\sqrt[4]{45}}. \]

\subsubsection[Case $B_1\neq0$, $B_2=0$]{Case $B_1\neq0$, $B_2=0$}
\label{isosceles_triangles}

In this case,
\[  A(x,y) = 1/9 + (x^2 + y^2)^2 - 2B_1x^2 + 2B_1y^2 + B_1^2,  \]
and
\[  A_x(x,y) = 4x(x^2 + y^2) - 4B_1x,  \]
\[  A_y(x,y) = 4y(x^2 + y^2) + 4B_1y.  \]

Hence,
\begin{equation}
 A(x,0) = 1/9 + x^4 - 2B_1x^2 + B_1^2, \quad A(0,y) = 1/9 + y^4 + 2B_1y^2 + B_1^2.
\label{B2Z_1}
\end{equation}
\begin{equation}
 A_x(x,0) = 4x^3 - 4B_1x, \quad  A_y(x,0) = 0.
\label{B2Z_2}
\end{equation}
\begin{equation}
A_x(0,y) = 0, \quad  A_y(0,y) = 4y^3 + 4B_1y.
\label{B2Z_3}
\end{equation}

The system \eqref{critical-points_loci1} -- \eqref{critical-points_loci2} evaluated at points $(x,0)$ with $x\neq 0$, simplifies to:
\[ 2xA(x,0) - 3x^2A_x(x,0) = 0.  \]

Substituting \eqref{B2Z_1} and \eqref{B2Z_2} in the above equation we obtain:
\[ - 45x^4 + 36B_1x^2 + (1 + 9B_1^2) = 0. \]

This equation has real solutions:
\[ X_1 = \frac{1}{\sqrt{15}} \sqrt{\sqrt{81 B_1^2 + 5} + 6B_1},
\
X_2 = -\frac{1}{\sqrt{15}} \sqrt{\sqrt{81 B_1^2 + 5} + 6B_1}.
\]
Note that
\[ \sqrt{81 B_1^2 + 5} + 6B_1 \geq 0, \; B_1\in \R, \]
so, the system \eqref{critical-points_loci1} -- \eqref{critical-points_loci2} has real solutions $\widetilde{X}_1=(X_1,0)$ and $\widetilde{X}_2=(-X_1,0)$.

\smallskip
Analogously, solving the system \eqref{critical-points_loci1} -- \eqref{critical-points_loci2} for points $(0,y)$ with $y\neq 0$, we obtain
\[ - 45y^4 - 36B_1y^2 + (1 + 9B_1^2) = 0, \]
whose real solutions are
\[ Y_1 = \frac{1}{\sqrt{15}} \sqrt{\sqrt{81 B_1^2 + 5} - 6B_1},\
Y_2 = -\frac{1}{\sqrt{15}} \sqrt{\sqrt{81 B_1^2 + 5} - 6B_1}.
\]
Hence the system \eqref{critical-points_loci1} -- \eqref{critical-points_loci2} has real solutions $\widetilde{Y}_1=(0,Y_1)$ and $\widetilde{Y}_2=(0,-Y_1)$.

\subsubsection[Case: $B_1\neq 0$, $B_2\neq 0$]{Case: $B_1\neq 0$, $B_2\neq 0$}
\label{general_triangles}

Take $m\in \R\setminus \{0\}$ and substitute $y=mx$ in equations \eqref{Ax_Ay} to obtain:
\[
A_x(x,mx) = \frac{2A(x,mx)}{3x(1+m^2)}, \quad
A_y(x,mx) = m\frac{2A(x,mx)}{3x(1+m^2)},
\]
so,
\begin{equation}
 A_y(x,mx) = m A_x(x,mx).
\label{Ay(x,mx)}
\end{equation}

Setting $y=mx$ in the system \eqref{critical-points_loci1} -- \eqref{critical-points_loci2} we obtain:
\begin{align*}
2xA(x,mx) - 3x^2(1+m^2)A_x(x,mx) &= 0, \\
2mxA(x,mx) - 3x^2(1+m^2)A_y(x,mx) &= 0.
\end{align*}

Hence, if $x\neq 0$ and $m\neq 0$, the system \eqref{critical-points_loci1} -- \eqref{critical-points_loci2} is equivalent to the single equation:
\begin{equation}
2A(x,mx) - 3x(1+m^2)A_x(x,mx) = 0.
\label{Sistem_one_equation}
\end{equation}

Since $A_x(x,y) = 4( x^3 + xy^2 - B_1x -B_2y )$, it follows that:
\[ A_x(x,mx) = 4x^3(1 + m^2) - 4x(B_1 + B_2m). \]
Also,
\[
A(x,mx) = (1/9) + (1 + m^2)^2x^4 - 2x^2[B_1 + 2B_2m - B_1m^2] + (B_1^2 + B_2^2).
\]
After substituting these last identities into \eqref{Sistem_one_equation}, we conclude it is equivalent to:
\begin{equation}
 45(1 + m^2)^2x^4 - 18(2B_1 + B_2m + 4B_1m^2 + 3B_2m^3)x^2 - \big(9\abs{\mathtt{B}_{\tr}}^2 + 1\big) = 0.
\label{simplified_system}
\end{equation}

So, for $y=mx$ the equation $B_2m^2+2B_1m-B_2=0$ holds. Since $B_2\neq 0$,
\[ m = \frac{-B_1 \pm \sqrt{B_1^2+B_2^2}}{B_2}. \]
If $m_1 = \frac{-B_1 + \abs{\mathtt{B}_{\tr}}}{B_2}$ and $m_2 = \frac{-B_1 - \abs{\mathtt{B}_{\tr}}}{B_2}$,
\[
m_1m_2 = \frac{\big(-B_1+\sqrt{B_1^2+B_2^2}\big)\big(-B_1-\sqrt{B_1^2+B_2^2}\big)}{B_2^2} = -1.
\]
Hence, the angle between the two lines in the degenerated conic is $\pi/2$.

We will use the following identities:
\begin{equation}
1 + m_1^2 = 2 \frac{\abs{\mathtt{B}_{\tr}}^2 - B_1\abs{\mathtt{B}_{\tr}}}{B_2^2}
= 2\frac{\abs{\mathtt{B}_{\tr}}\big( \abs{\mathtt{B}_{\tr}} - B_1\big)}{B_2^2},
\label{Eq1}
\end{equation}
\begin{equation}
 1 + m_2^2 = 2 \frac{\abs{\mathtt{B}_{\tr}}^2 + B_1\abs{\mathtt{B}_{\tr}}}{B_2^2}
= 2\frac{\abs{\mathtt{B}_{\tr}}\big( \abs{\mathtt{B}_{\tr}} + B_1\big)}{B_2^2},
\label{Eq2}
\end{equation}

\begin{equation}
2B_1 + B_2m_1 + 4B_1m_1^2 + 3B_2m_1^3 = \frac{- 4B_1\abs{\mathtt{B}_{\tr}}^2 + 4\abs{\mathtt{B}_{\tr}}^3}{B_2^2},
\label{Eq3}
\end{equation}
\begin{equation}
2B_1 + B_2m_2 + 4B_1m_2^2 + 3B_2m_2^3 = \frac{- 4B_1\abs{\mathtt{B}_{\tr}}^2 - 4\abs{\mathtt{B}_{\tr}}^3}{B_2^2}.
\label{Eq4}
\end{equation}

Using equations \eqref{Eq1} and \eqref{Eq3}, the equation \eqref{simplified_system} for $m_1$ is:
\begin{align*}
& 180 \abs{\mathtt{B}_{\tr}}^2\big( \abs{\mathtt{B}_{\tr}} - B_1\big)^2 x^4 - 72 B_2^2 \abs{\mathtt{B}_{\tr}}^2 \big( \abs{\mathtt{B}_{\tr}} -
B_1 \big) x^2 \\
&\quad - B_2^4 \big(9\abs{\mathtt{B}_{\tr}}^2 + 1\big)
= 0,
\end{align*}
with real solutions
\[
x_1, x_2 = \pm \frac{B_2}{\sqrt{30}} \frac{\sqrt{ \sqrt{ 81 \abs{\mathtt{B}_{\tr}}^2 + 5} + 6 \abs{\mathtt{B}_{\tr}} }}
{\sqrt{\abs{\mathtt{B}_{\tr}}^2 - B_1 \abs{\mathtt{B}_{\tr}}}},
\]
where $\abs{\mathtt{B}_{\tr}}^2 - B_1\abs{\mathtt{B}_{\tr}} \geq 0$. If we set
\[ x_0 = \frac{\sqrt{ \sqrt{ 81 \abs{\mathtt{B}_{\tr}}^2 + 5} + 6 \abs{\mathtt{B}_{\tr}} }}
{\sqrt{\abs{\mathtt{B}_{\tr}}^2 - B_1 \abs{\mathtt{B}_{\tr}}}} \]
from these solutions, we obtain two critical points $\widetilde{Z}_1$ and $\widetilde{Z}_2$ for the curvature $K$:
\begin{align*}
\widetilde{Z}_1 &= \Bigg( \frac{B_2}{\sqrt{30}} x_0,
\frac{(-B_1 + \abs{\mathtt{B}_{\tr}})}{\sqrt{30}} x_0 \Bigg), \\
\widetilde{Z}_2 &= - \widetilde{Z}_1.
\end{align*}

Analogously, from equations \eqref{Eq2} and \eqref{Eq4} we obtain the equation \eqref{simplified_system} for $m_2$ is:
\begin{align*}
& 180 \abs{\mathtt{B}_{\tr}}^2\big( \abs{\mathtt{B}_{\tr}} + B_1\big)^2 x^4 + 72 B_2^2 \abs{\mathtt{B}_{\tr}}^2 \big( \abs{\mathtt{B}_{\tr}} + B_1 \big) x^2  \\
& \quad - B_2^4 \big(9\abs{\mathtt{B}_{\tr}}^2 + 1\big) = 0,
\end{align*}
with real solutions
\[
x_1, x_2 = \pm \frac{B_2}{\sqrt{30}} \frac{\sqrt{ \sqrt{ 81 \abs{\mathtt{B}_{\tr}}^2 + 5} - 6 \abs{\mathtt{B}_{\tr}} }}
{\sqrt{ \abs{\mathtt{B}_{\tr}}^2 + B_1 \abs{\mathtt{B}_{\tr}} }}.
\]
Note that
\[
- 6 \abs{\mathtt{B}_{\tr}} + \sqrt{81 \, \abs{\mathtt{B}_{\tr}}^{2} + 5} > 0 \; \iff \; 7\abs{\mathtt{B}_{\tr}}^2 + 1 > 0.
\]

If we set
\[ x'_0 = \frac{\sqrt{ \sqrt{ 81 \abs{\mathtt{B}_{\tr}}^2 + 5} - 6 \abs{\mathtt{B}_{\tr}} }}
{\sqrt{\abs{\mathtt{B}_{\tr}}^2 + B_1 \abs{\mathtt{B}_{\tr}}}}, \]
we obtain two critical points $\widetilde{W}_1$ and $\widetilde{W}_2$ for the curvature $K$ from these solutions:
\begin{align*}
\widetilde{W}_1 &= \Bigg( \frac{B_2}{\sqrt{30}} x'_0,
\frac{(-B_1 - \abs{\mathtt{B}_{\tr}})}{\sqrt{30}} x'_0 \Bigg), \\
\widetilde{W}_2 &= - \widetilde{W}_1.
\end{align*}

The analysis carried out can be subsumed in the following:

\begin{theorem}
\label{critical-points_theorem}
If $f(z)=(z-a)(z-b)(z-c)$, then the curvature $K(x,y)$ associated with the graph of $f$ has one critical point located at $\mathtt{b}_{\tr}$. The other critical points are described as follows:
\begin{enumerate}
\item If $B_1=B_2=0$, there is a circumference of critical points centered at the barycenter, with radius $\frac{1}{\sqrt[4]{45}}$.

\item If $B_1\neq 0$ and $B_2=0$, the other 4 critical points are symmetric, by pairs, with respect to the barycenter:
\[
\mathtt{b}_{\tr}+\widetilde{X}_1, \mathtt{b}_{\tr}-\widetilde{X}_1
\quad \text{ and }\quad
\mathtt{b}_{\tr}+\widetilde{Y}_1, \mathtt{b}_{\tr} - \widetilde{Y}_1.
\]

\item If $\mathtt{B}_\tr\neq0$, the other 4 critical points are symmetric, by pairs, with respect to the barycenter:
\[
\mathtt{b}_{\tr}+\widetilde{Z}_1, \mathtt{b}_{\tr}-\widetilde{Z}_1
\quad \text{ and }\quad
\mathtt{b}_{\tr}+\widetilde{W}_1, \mathtt{b}_{\tr} - \widetilde{W}_1.
\]
\end{enumerate}
\end{theorem}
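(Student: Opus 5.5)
The plan is to assemble the computations of Section~\ref{computation_critical-points} into a complete classification of the zero set of the gradient. First I will reduce to $\mathtt{b}_\tr=0$ using Lemma~\ref{symmetry_curvature}(a). This identifies critical points of $K$ with $\mathtt{b}_\tr+w$, where $w=(x,y)$ solves the system \eqref{critical-points_loci1}--\eqref{critical-points_loci2}. The factor $-C_0/A^4$ in \eqref{Kx}--\eqref{Ky} never vanishes, because $A\ge 1/9>0$. Hence the critical set is exactly the solution set of that system. The origin $w=0$ is always a solution, which gives the critical point at $\mathtt{b}_\tr$.

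For $w\neq 0$, I will use the reduction of Subsection~\ref{critical-points_analysis}. Writing \eqref{Ax_Ay} and eliminating $A/(x^2+y^2)$ shows that every nonzero solution lies on the degenerate conic \eqref{degenerated_conic}. One point needs care. That reduction divided by $x$ and by $y$, so I will instead derive $yA_x-xA_y=0$ directly: multiply \eqref{critical-points_loci1} by $y$ and \eqref{critical-points_loci2} by $x$, then subtract. This avoids any division.

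Conversely, on each line through the origin with $w\neq0$ the two equations collapse to one. For a line $y=mx$ this is shown in \eqref{Ay(x,mx)}--\eqref{Sistem_one_equation}. For the axes I will use the identities \eqref{B2Z_2}--\eqref{B2Z_3}, which make one of the two equations trivial. Thus the nonzero critical points are exactly the nonzero roots of a single one-variable equation on each line of the conic.

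The cases are then handled as follows.

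\textbf{Case $B_1=B_2=0$.} The conic is identically zero, so every direction is allowed. The radial equation $A=6|w|^4$ gives $|w|^4=1/45$, which is the stated circle.

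\textbf{Case $B_1\neq0$, $B_2=0$.} The conic is $xy=0$. On the $x$-axis the equation is the biquadratic $-45x^4+36B_1x^2+(1+9B_1^2)=0$. Viewed as a quadratic in $x^2$, its two roots have product $-(1+9B_1^2)/45<0$. So exactly one root is positive, namely $x^2=X_1^2$, giving the two points $\pm\widetilde X_1$. The $y$-axis is handled the same way and gives $\pm\widetilde Y_1$.

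\textbf{Case $B_2\neq0$.} The two slopes $m_1,m_2$ are the roots of $B_2m^2+2B_1m-B_2=0$. Substituting the identities \eqref{Eq1}--\eqref{Eq4} into \eqref{simplified_system} gives, on each line, a quadratic in $x^2$ whose constant term is $-B_2^4(9|\mathtt{B}_\tr|^2+1)<0$. Its leading coefficient is positive because $|\mathtt{B}_\tr|\pm B_1>0$ when $B_2\neq0$. So again there is exactly one positive root for $x^2$. This yields exactly the points $\pm\widetilde Z_1$ and $\pm\widetilde W_1$. Item (3) as stated covers all $\mathtt{B}_\tr\neq0$. When $B_2=0$ I will either read it as item (2), or observe that one can rotate the triangle using Lemma~\ref{symmetry_curvature}(b),(c) to pass between the two descriptions.

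The only delicate step is the verification of \eqref{Eq1}--\eqref{Eq4} and the resulting explicit root formulas. This is a direct computation using $B_2m_i^2=B_2-2B_1m_i$ to reduce powers of $m_i$. I would simply carry it out and then check the sign argument above, since that argument is what guarantees exactly one pair of points per line. Symmetry of each pair about the barycenter follows from the oddness of $A_x,A_y$ noted in Subsection~\ref{critical-points_analysis}.
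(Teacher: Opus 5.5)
Your proposal is correct and follows essentially the same route as the paper. The origin is always critical, every other critical point lies on the degenerate conic $B_2x^2-2B_1xy-B_2y^2=0$, and on each line of that conic the system reduces to a single biquadratic in $x$, which is solved explicitly in each case. Your refinements tighten the paper's argument: you obtain $yA_x-xA_y=0$ without dividing by $x$ or $y$, you treat all $w\neq 0$ in the equilateral case, your product-of-roots argument gives exactly one positive root for $x^2$ on each line, and you flag that the formulas for $\widetilde{Z}_1,\widetilde{W}_1$ break down when $B_2=0$. One point to adjust: for the converse on the conic lines, obtain $A_y(x,mx)=mA_x(x,mx)$ directly from the identity $yA_x-xA_y=4(B_2x^2-2B_1xy-B_2y^2)$. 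The paper derives it from \eqref{Ax_Ay}, which assumes the point is already critical, so citing that step would be circular.
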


\subsection[The Nature of Critical Points]{The Nature of Critical Points}
\label{hessians_critical-points}

We assume that $\mathtt{B}_\tr\neq 0$ throughout this section.

Recall that, when $\mathtt{B}_\tr=0$, Theorem~\ref{critical-points_theorem}(1) shows that there exists a critical circumference centered at the barycenter $\mathtt{b}_{\tr}$ of $T$, where the curvature attains a global maximum.

 We describe how $\mathtt{b}_{\tr}$ and $\mathtt{B}_{\tr}$ transform under isometries, using the symmetries of the curvature. We first translate $\tr$ to locate its barycenter at the origin and then rotate it to get $\imag(\mathtt{B}_{\tr})=0$. In this case, the critical points of the curvature (Theorem~\ref{critical-points_theorem}(2)) are located on lines parallel to the coordinate axes, which intersect at the barycenter. In fact, they coincide with the coordinate axes in this case. This simplifies computations determining the nature of four critical points. Using the symmetry of the curvature with respect to the origin, we complete the analysis.

Any triangle $\triangle$ with $\mathtt{B}_\tr\neq 0$ can be translated by $t:=-\mathtt{b}_{\tr}$ to obtain  $\tr_{+t}:=\tr(a+t,b+t,c+t)$. Lemma~\ref{B_invariant} implies that $\mathtt{b}_{\tr_{+t}}=\mathtt{b}_{\tr}-\mathtt{b}_{\tr}=0$, i.e., $\tr_{+t}$ has its barycenter at the origin. The Gaussian curvature has translational symmetry (Lemma~\ref{symmetry_curvature}(a)), which implies that its critical points correspond to the critical points of $K_{\tr_{+t}}$. Write $\mathtt{B}_{\tr_{+t}}=B_1^t+iB_2^t$ and suppose that $B_2^t\neq 0$. Let $\tr_{t\theta}=\tr\big(e^{i\theta}(a+t),e^{i\theta}(b+t),e^{i\theta}(c+t)\big)$. The critical points of $K_{\tr_{+t}}$ correspond to the critical points of $K_{t\theta}$ after rotation because, by Lemma~\ref{symmetry_curvature}(b), the curvature has rotational symmetry. Note also that if $\mathtt{b}_{\tr_{t\theta}}$ is the barycenter of $\tr_{t\theta}$, we have by Lemma~\ref{B_invariant}:
\[
\mathtt{b}_{\tr_{t\theta}} = e^{i\theta}\mathtt{b}_{\tr_{+t}} = e^{i\theta}\cdot 0 = 0.
\]

Moreover, if $\mathtt{B}_{\tr_{t\theta}}=B_1^{t\theta}+iB_2^{t\theta}$, we can choose $\theta$ such that $B_1^{t\theta}\neq 0$, but $B_2^{t\theta}=0$ by Lemma~\ref{symmetry_curvature}(c). We will use the following symmetry:
\begin{align*}
K_{t\theta}(z) &= - C_0 \frac{\abs{z-\mathtt{b}_{\tr_{t\theta}}}^2}{\big[ (1/9) + \abs{(z-\mathtt{b}_{\tr_{t\theta}})^2-\mathtt{B}_{\tr_{t\theta}} }^2 \big]^3}, \\
&= - C_0 \frac{\abs{z}^2}{\big[ (1/9) + \abs{ z^2-\mathtt{B}_{\tr_{t\theta}} }^2 \big]^3},  && \text {by } \mathtt{b}_{\tr_{t\theta}}=0,  \\
&= -C_0\frac{\abs{-z}^2}{\big[ (1/9)+\abs{(-z)^2-\mathtt{B}_{\tr_{t\theta}}}^2 \big]^3},  &&  \\
&= K_{t\theta}(-z).
\end{align*}

\subsubsection*{The Nature of Critical Points}

We may suppose that $B_2=0$, $B_1\neq 0$, $\mathtt{b}_\tr=0$, based on the discussion above. Thus, the following equation holds:
\begin{equation}
\label{eq:K-symmetry}
K(z)=K(-z).
\end{equation}

By Theorem \ref{critical-points_theorem}(2), the critical points of $K$ are located at the coordinate axes, i.e., at $(0,0)$, $(\pm X_1,0)$ and
$(0,\pm Y_1)$, where
\[
X_1 = \frac{1}{\sqrt{15}} \sqrt{\sqrt{81 B_1^2 + 5} + 6B_1}, \quad
Y_1 = \frac{1}{\sqrt{15}} \sqrt{\sqrt{81 B_1^2 + 5} - 6B_1}.
\]

Since $B_2=0$ and $B_1\neq 0$ we have
\[ A(x,y) = \frac{1}{9} + (x^2-y^2-B_1)^2 + 4x^2y^2. \]
By \eqref{Kx} and \eqref{Ky}, the partial derivatives $K_x$ and $K_y$ are given by
\[ K_x(x,y) = \frac{-C_0}{A(x,y)^4} \bigg[ 2xA(x,y)-3(x^2+y^2)A_x(x,y) \bigg], \]
\[ K_y(x,y) = \frac{-C_0}{A(x,y)^4} \bigg[ 2yA(x,y)-3(x^2+y^2)A_y(x,y) \bigg] , \]
where $C_0=\frac{2^3}{3^4}$. Also,
\[ K_{xx}(x,y) = - \frac{C_0}{A(x,y)^5}\ N_{xx}(x,y), \quad
K_{xy}(x,y) = - \frac{C_0}{A(x,y)^5}\ N_{xy}(x,y), \]
and
\[ K_{yy}(x,y) = - \frac{C_0}{A(x,y)^5}\ N_{yy}(x,y), \]
where
\begin{align*}
N_{xx}(x,y) &:= 2A(x,y)^2 - 12xA(x,y)A_x(x,y) + 12(x^2+y^2)A_x(x,y)^2 \\
&\quad - 3(x^2+y^2)A(x,y)A_{xx}(x,y),
\end{align*}
\begin{align*}
N_{yy}(x,y) &:= 2A(x,y)^2 - 12yA(x,y)A_y(x,y) + 12(x^2+y^2)A_y(x,y)^2 \\
&\quad - 3(x^2+y^2)A(x,y)A_{yy}(x,y),
\end{align*}
and
\begin{align*}
N_{xy}(x,y) &:= - 6yA(x,y)A_x(x,y) - 6xA(x,y)A_y(x,y) \\
&\quad + 12(x^2+y^2)A_x(x,y)A_y(x,y) - 3(x^2+y^2)A(x,y)A_{xy}(x,y).
\end{align*}

Hence, we define:
\begin{equation}
\label{HessN}
\text{Hess}(N(x,y)) := N_{xx}(x,y)N_{yy}(x,y) - N_{xy}(x,y)^2.
\end{equation}

The following relation holds:
\begin{equation}
\label{HessK}
\text{Hess}(K(x,y)) = \frac{C_0^2}{A(x,y)^{10}} \,\Hess(N(x,y)).
\end{equation}

The following expressions were obtained using SageMath \cite{Sage}:
\begin{equation}
\label{HessK(x,0)}
\text{Hess}(K(x,0)) = - C_1 C_0^2 \frac{P_1(x)\cdot P_2(x)}{P_3(x)^9},
\end{equation}
\begin{equation}
\label{HessK(0,y)}
\text{Hess}(K(0,y)) = - C_1 C_0^2 \frac{Q_1(y)\cdot Q_2(y)}{Q_3(y)^9},
\end{equation}
where
\begin{itemize}
\item $P_1(x) = 4455x^8 - 6642B_1x^6 + 2106B_1^3x^2 + 81B_1^4 - 360x^4 + 234B_1x^2 + 18B_1^2 + 1$,
\item $P_2(x) = 45x^4 + 72B_1x^2 - 9B_1^2 - 1$,
\item $P_3(x) = 9x^4 - 18B_1x^2 + 9B_1^2 + 1$,
\item $Q_1(y) = 4455y^8 + 6642B_1y^6 - 2106B_1^3y^2 + 81B_1^4 - 360y^4 - 234B_1 y^2 + 18 B_1^2 + 1$,
\item $Q_2(y) = 45y^4 - 72B_1y^2 - 9B_1^2 - 1$,
\item $Q_3(y) = 9y^{4} + 18B_1y^2 + 9B_1^2 + 1$, and,
\item $C_1=2125764$.
\end{itemize}

\smallskip
We now state two important results regarding $\text{Hess}(K(X,Y))$; the proof can be found in the Appendix.

\begin{lemma}
\label{sign_Hess}
From identities \eqref{HessK(x,0)} and \eqref{HessK(0,y)}, it follows that:
\begin{enumerate}
\item[(a)] If $B_1>0$, then $P_1(X_1)<0$, $P_2(X_1)>0$ and $P_3(X_1)>0$. Also,
$Q_1(Y_1)<0$, $Q_2(Y_1)<0$, and $Q_3(Y_1)>0$. Consequently,
\[\Hess(K(X_1,0))>0, \quad\Hess(K(0,Y_1))<0. \]
\item[(b)] If $B_1<0$, then $P_1(X_1)<0$, $P_2(X_1)<0$, and $P_3(X_1)>0$. Also, $Q_1(Y_1)<0$, $Q_2(Y_1)>0$, $Q_3(Y_1)>0$. Consequently,
\[\Hess(K(X_1,0))<0, \quad\Hess(K(0,Y_1))>0. \]
\end{enumerate}
\end{lemma}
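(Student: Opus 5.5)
The plan is to exploit that $X_1^2$ and $Y_1^2$ are roots of explicit quadratics. This lets us reduce each polynomial $P_i(X_1)$, $Q_i(Y_1)$ to a linear expression in $s:=X_1^2$ (resp. $t:=Y_1^2$), whose sign can be read off.

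First I would record the symmetry $Q_i(y)=P_i(y)\big|_{B_1\mapsto -B_1}$ for $i=1,2,3$, which is visible from the listed formulas. Likewise $Y_1(B_1)=X_1(-B_1)$. Hence every statement about the $Q_i(Y_1)$ for a given sign of $B_1$ follows from the corresponding statement about the $P_i(X_1)$ for the opposite sign. So it suffices to treat $P_1,P_2,P_3$ at $X_1$ for arbitrary $B_1\neq 0$.

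Next come the easy factors. Since $P_3(x)=9(x^2-B_1)^2+1>0$, we get $P_3(X_1)>0$, and likewise $Q_3(Y_1)>0$. For $P_2$, recall from Section~\ref{isosceles_triangles} that $s=X_1^2>0$ satisfies
\[ 45s^2 = 36B_1 s + (1+9B_1^2). \]
Substituting this into $P_2(X_1)=45s^2+72B_1s-9B_1^2-1$ gives $P_2(X_1)=108B_1 s$. Its sign is therefore the sign of $B_1$. By the symmetry, $Q_2(Y_1)=-108B_1t$ with $t=Y_1^2>0$. Here $t>0$ because $\sqrt{81B_1^2+5}>6\abs{B_1}$. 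This settles all the claims about $P_2,P_3,Q_2,Q_3$.

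The main obstacle is $P_1(X_1)<0$ for every $B_1\neq 0$. I would write $P_1(X_1)$ as a quartic in $s$:
\[ 4455s^4-6642B_1s^3-360s^2+(2106B_1^3+234B_1)s+81B_1^4+18B_1^2+1, \]
and repeatedly use $s^2=(36B_1s+D)/45$ with $D=1+9B_1^2$. This reduces it to a form $\alpha(B_1)+\beta(B_1)s$ with polynomial $\alpha,\beta$. Then I substitute $s=\big(r+6B_1\big)/15$, where $r=\sqrt{81B_1^2+5}$, obtaining an expression $\gamma(B_1)+\delta(B_1)r$. To prove it is negative, I would compare $\gamma$ and $\delta r$ by sign cases and square where needed, using $r^2=81B_1^2+5$. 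Equivalently, one can rationally parametrize the conic $r^2-81B_1^2=5$ and reduce to the sign of a single one-variable polynomial on an interval. I expect this to be the computationally delicate step, possibly requiring a computer-algebra check as with the SageMath expressions. A sanity check is $B_1\to 0$: then $s=1/\sqrt{45}$, and $P_1$ becomes $4455s^4-360s^2+1=2.2-8\sqrt5+1<0$.

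Finally, I would combine the signs in \eqref{HessK(x,0)}. Since $P_1(X_1)<0$ and $P_3>0$, the sign of $\Hess(K(X_1,0))=-C_1C_0^2P_1P_2/P_3^9$ equals the sign of $P_2(X_1)$, i.e.\ the sign of $B_1$. Similarly, by \eqref{HessK(0,y)}, the sign of $\Hess(K(0,Y_1))$ equals the sign of $Q_2(Y_1)$, i.e.\ the sign of $-B_1$. This gives (a) and (b).
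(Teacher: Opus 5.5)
Your proposal is sound, and the parts you actually carry out are correct and cleaner than the paper's. The paper evaluates $5P_2(X_1)=216B_1^2+36B_1\sqrt{81B_1^2+5}$ and $\tfrac{25}{6}P_3(X_1)=27B_1^2-3B_1\sqrt{81B_1^2+5}+5$ explicitly and settles their signs by squaring. By contrast, your reduction modulo $45s^2=36B_1s+(1+9B_1^2)$ gives $P_2(X_1)=108B_1X_1^2$ immediately, and $P_3=9(x^2-B_1)^2+1$ needs no argument at all. The substitution $B_1\mapsto -B_1$ makes precise the paper's remark that the $Q_i$ cases are ``completely analogous.'' For $P_1$ your route is the paper's own: express $P_1(X_1)$ through $B_1$ and $r=\sqrt{81B_1^2+5}$, then square.

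The gap is that $P_1(X_1)<0$ is the only nontrivial claim in the lemma, and your proposal stops at describing the procedure, so as written it is not proved. It does close, and more easily than you expect. Carrying out your reduction gives
\[
P_1(X_1)=-\tfrac{24}{25}\bigl(243B_1^4+72B_1^2+5\bigr)+\tfrac{216}{25}\,B_1\bigl(27B_1^2-5\bigr)\,s .
\]
With $s=(r+6B_1)/15$ this becomes
\[
\tfrac{125}{24}\,P_1(X_1)=-\bigl(729B_1^4+450B_1^2+25\bigr)+B_1\bigl(81B_1^2-15\bigr)\,r ,
\]
which is one quarter of the paper's expression for $\tfrac{125}{6}P_1(X_1)$. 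With $u=B_1^2\ge 0$,
\[
\bigl(729u^2+450u+25\bigr)^2-u\,(81u-15)^2(81u+5)=125\bigl(6561u^3+1863u^2+171u+5\bigr)>0 .
\]
So $\abs{B_1(81B_1^2-15)\,r}<729B_1^4+450B_1^2+25$ for every real $B_1$, and hence $P_1(X_1)<0$.

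This needs neither the split into $B_1>0$ and $B_1<0$ that the paper uses nor a computer check. Your symmetry then yields $Q_1(Y_1)<0$.

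One arithmetic slip in your sanity check: $360s^2=360/45=8$, not $8\sqrt5$, so $P_1(X_1)=-24/5$ at $B_1=0$. This agrees with the formula above.
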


\begin{lemma}
\label{sign_Kxx}
\begin{enumerate}
\item[(a)] If $B_1>0$, then $K_{xx}(X_1,0)>0$.
\item[(b)] If $B_1<0$, then $K_{xx}(0,Y_1)>0$.
\end{enumerate}
\end{lemma}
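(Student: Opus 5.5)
The plan is to avoid the large SageMath expressions and compute $K_{xx}$ by hand at the two points, using the factorization $K_{xx}=-C_0N_{xx}/A^5$. Since $A>0$ everywhere, the sign of $K_{xx}$ is the opposite of the sign of $N_{xx}$. In both cases it therefore suffices to show $N_{xx}<0$ at the relevant critical point. The two tools are the critical-point relation \eqref{Ax_Ay} and the explicit quartic equations of Section~\ref{isosceles_triangles}, which $X_1^2$ and $Y_1^2$ satisfy.

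\emph{Case (a), $B_1>0$, point $(X_1,0)$.} Here $y=0$ and $x=X_1\neq0$, so \eqref{Ax_Ay} gives $A_x=2A/(3x)$. Substituting this into the definition of $N_{xx}$ collapses the first three terms:
\[
2A^2-8A^2+\tfrac{16}{3}A^2=-\tfrac23A^2 .
\]
Next, $A_{xx}=12x^2+4y^2-4B_1$, which equals $12x^2-4B_1$ on the $x$-axis. Hence
\[
N_{xx}(X_1,0)=-A\Big[\tfrac23A+12X_1^2(3X_1^2-B_1)\Big].
\]
Since $A>0$, it is enough to check $3X_1^2>B_1$. From the formula for $X_1$,
\[
15X_1^2=\sqrt{81B_1^2+5}+6B_1>9B_1+6B_1=15B_1,
\]
so $X_1^2>B_1$ and the bracket is positive. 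Thus $N_{xx}<0$ and $K_{xx}(X_1,0)>0$.

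\emph{Case (b), $B_1<0$, point $(0,Y_1)$.} Now $x=0$, and by \eqref{B2Z_3} we have $A_x(0,y)=0$. Also $A_{xx}(0,y)=4(y^2-B_1)$. The definition of $N_{xx}$ then gives
\[
N_{xx}(0,Y_1)=A\big[2A-12Y_1^2(Y_1^2-B_1)\big],
\]
so we need $12s(s-B_1)>2A$, where $s=Y_1^2$.

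Write $c=-B_1>0$. Then $A(0,Y_1)=\tfrac19+(s-c)^2$, and the quartic from Section~\ref{isosceles_triangles} reads $45s^2=36cs+1+9c^2$. The desired inequality, multiplied by $9$, becomes
\[
90s^2+144cs-18c^2-2>0 .
\]
Replace $90s^2$ using the quartic relation. The left side then reduces to exactly $216cs$, which is positive. Hence $N_{xx}(0,Y_1)<0$ and $K_{xx}(0,Y_1)>0$.

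The only delicate step is case (b). There the naive sign check fails because $2A$ and $12s(s-B_1)$ are comparable in size, so the reduction must go through the defining quartic for $Y_1^2$ rather than through crude bounds. I would carry out that algebra carefully and cross-check it against the SageMath output already used for Lemma~\ref{sign_Hess}. Everything else is direct substitution.
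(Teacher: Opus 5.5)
Your proof is correct, and it takes a genuinely different and much cleaner route than the paper's. The paper uses SageMath to write $N_{xx}(X_1,0)$ as an explicit radical expression in $B_1$: with $R=\sqrt{81B_1^2+5}$, it obtains $\frac{16875}{8}N_{xx}(X_1,0)=-7290B_1^4-4293B_1^3R+63B_1R^3-4500B_1^2-465B_1R-250$. It then proves negativity for $B_1>0$ by a chain of rearrangements, two squarings and coefficient comparisons, and it dismisses case (b) as ``completely analogous.''

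You instead impose the critical-point equations before expanding anything. On the $x$-axis, $A_x=2A/(3x)$ collapses three terms of $N_{xx}$ to $-\tfrac23A^2$, so only $3X_1^2>B_1$ remains to check. On the $y$-axis, $A_x=0$, and the quartic $45s^2=36cs+1+9c^2$ turns the required inequality into the exact identity $9\bigl[12s(s+c)-2A\bigr]=216cs$. I checked both computations, including numerically on $T=\triangle(i,-i,1)$, where $c=2/9$ and $s=13/45$.

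Your route buys several things:
\begin{itemize}
\item It can be verified by hand.
\item It avoids squaring steps whose validity depends on sign bookkeeping.
\item It actually carries out case (b) instead of asserting it.
\item It explains why (b) is the delicate case. The margin $24cs$ tends to $0$ as $B_1\to0^-$. This matches the degeneration to the critical circle, along which the tangential second derivative vanishes.
\end{itemize}

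Your simplification also shows that the paper's expression is needlessly opaque. Carrying your reduction through gives $\tfrac23A+12X_1^2(3X_1^2-B_1)=\tfrac83RX_1^2$. Hence the SageMath expression factors as $-10\,(27B_1^2+5-3B_1R)(81B_1^2+5+6B_1R)$. The first factor is a positive multiple of $A(X_1,0)$ and the second equals $15RX_1^2$, so both are manifestly positive. In particular $K_{xx}(X_1,0)>0$ for either sign of $B_1$.

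The paper's approach has one advantage: it yields an explicit closed form in $B_1$ from the same computer-algebra pipeline used for the Hessians. That closed form, however, is not needed for the sign.
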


To summarize,
\begin{enumerate}[I.]
\item ($B_1>0$) By Lemma \ref{sign_Hess}(a), $\text{Hess}(K(X_1,0))>0$ and $\text{Hess}(K(0,Y_1))<0$. Hence,
\begin{itemize}
\item $(0,Y_1)$ is a saddle point.
\item By Lemma \ref{sign_Kxx}(a), $K_{xx}(X_1,0)>0$. Thus, $(X_1,0)$ is a local minimum.
\item The symmetry \eqref{eq:K-symmetry} implies that $(0,-Y_1)$ is a saddle  point and $(-X_1,0)$ is a local minimum.
\end{itemize}

\item ($B_1<0$) By Lemma \ref{sign_Hess}(b), $\text{Hess}(K(X_1,0))<0$ and $\text{Hess}(K(0,Y_1))>0$. Hence,
\begin{itemize}
\item $(X_1,0)$ is a saddle point.
\item By Lemma \ref{sign_Kxx}(b), $K_{xx}(0,Y_1)>0$. Therefore, $(0,Y_1)$ is a local minimum.
\item The symmetry \eqref{eq:K-symmetry} implies that $(0,-Y_1)$ is a local minimum and $(-X_1,0)$ is a saddle point.
\end{itemize}
\end{enumerate}
\smallskip

The following identities hold at the barycenter $(0,0)$:
\[
\text{Hess}(K(0,0)) = \frac{C_1 C_0^2}{(9B_1^2 + 1)^6}>0, \quad
K_{xx}(0,0) = - \frac{1458 C_0}{(9B_1^2 + 1)^3}<0.
\]
Hence $(0,0)$ is a global maximum. The analysis above is summarized in the next:

\begin{theorem}
\label{Hessiano_B2=0}
Suppose that $B_2=0$, $B_1\neq 0$ and $\mathtt{b}_\tr=0$ hold for $T=\tr(a,b,c)$. The critical points of the Gaussian curvature $K$ associated with the cubic polynomial $f(z)=(z-a)(z-b)(z-c)$ admit the following classification:
\begin{enumerate}[(a)]
\item[(a)] The origin $(0,0)$ is a global maximum.
\item[(b)] If $B_1>0$, then $(\pm X_1,0)$ are local minima and $(0,\pm Y_1)$ are saddle points.
\item[(c)] If $B_1<0$, then $(\pm X_1,0)$ are saddle points and $(0,\pm Y_1)$ are local minima.
\end{enumerate}
\end{theorem}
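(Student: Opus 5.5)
The classification rests on the second derivative test at each of the five critical points. By Theorem~\ref{critical-points_theorem}(2), these are the origin, $(\pm X_1,0)$ and $(0,\pm Y_1)$. Parts (b) and (c) then follow from Lemma~\ref{sign_Hess} and Lemma~\ref{sign_Kxx} together with the symmetry \eqref{eq:K-symmetry}. Part (a) requires a separate computation at the origin plus a global comparison argument.

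\emph{Parts (b) and (c).} First assume $B_1>0$. Lemma~\ref{sign_Hess}(a) gives $\Hess(K(0,Y_1))<0$, so $(0,Y_1)$ is a saddle point. It also gives $\Hess(K(X_1,0))>0$, and combined with $K_{xx}(X_1,0)>0$ from Lemma~\ref{sign_Kxx}(a) this makes $(X_1,0)$ a strict local minimum.

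The identity $K(z)=K(-z)$ means $K\circ(-\mathrm{id})=K$. Hence the Hessian of $K$ at $-p$ equals the Hessian at $p$, because the linear map $-\mathrm{id}$ contributes the factor $(-1)^2=1$. So the types transfer to $(-X_1,0)$ and $(0,-Y_1)$. The case $B_1<0$ is the same argument, using Lemma~\ref{sign_Hess}(b) and Lemma~\ref{sign_Kxx}(b) with the roles of the axes exchanged.

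\emph{Local behaviour at the origin.} I would compute the second derivatives directly from the formulas for $N_{xx},N_{xy},N_{yy}$. At $(0,0)$ the terms carrying a factor $x$, $y$ or $x^2+y^2$ vanish. This leaves $N_{xx}(0,0)=N_{yy}(0,0)=2A(0,0)^2$ and $N_{xy}(0,0)=0$, where $A(0,0)=\tfrac19+B_1^2$. Therefore $K_{xx}(0,0)=K_{yy}(0,0)=-2C_0/A(0,0)^3<0$ and the Hessian is positive, so the origin is a strict local maximum.

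\emph{Globality of the maximum.} This is the step I expect to be the main obstacle. Since $K\le 0$ everywhere and $K(0,0)=0$, the inequality $K(z)\le K(0,0)$ holds trivially. The origin is also the only zero of $K$, because $K(z)=0$ exactly when $\abs{z}=0$. So the origin is a global maximum, and in fact the unique point where it is attained, which is all that is needed.

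The identities quoted before the theorem state $K_{xx}(0,0)=-1458C_0/(9B_1^2+1)^3$. This agrees with my value, since $A(0,0)=(1+9B_1^2)/9$ gives $-2C_0/A(0,0)^3=-1458C_0/(9B_1^2+1)^3$. The real content of the theorem is therefore the sign lemmas, which are taken from the Appendix.
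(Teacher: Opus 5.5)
Your proposal is correct and is essentially the paper's argument. Parts (b) and (c) come from Lemmas~\ref{sign_Hess} and~\ref{sign_Kxx} together with the symmetry $K(z)=K(-z)$, and your values $K_{xx}(0,0)=K_{yy}(0,0)=-2C_0/A(0,0)^3$, $K_{xy}(0,0)=0$ reproduce the paper's $\Hess(K(0,0))=C_1C_0^2/(9B_1^2+1)^6$ and $K_{xx}(0,0)=-1458C_0/(9B_1^2+1)^3$.

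The only difference is in (a). The paper goes from this local second-derivative data directly to ``global maximum,'' but that data alone only gives a local maximum. Your observation that $K\le 0=K(0,0)$, with equality only at the origin, is what actually justifies globality.
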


More generally, we have the following:
\begin{theorem}
\label{type_critical-points}
Let $T=\tr(a,b,c)$ be such that $\mathtt{B}_\tr \neq 0$. There exist two orthogonal lines $L_1$ and $L_2$ in the plane such that $L_1\cap L_2=\mathtt{b}_\tr$. The Gaussian curvature $K$ has two local minima located at $L_1$, which are equidistant from the barycenter $\mathtt{b}_\tr$, two saddle points located at $L_2$, which are also equidistant from the barycenter, and a global maximum at the barycenter.
\end{theorem}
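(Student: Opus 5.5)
The plan is to reduce to the normalized situation of Theorem~\ref{Hessiano_B2=0} by an isometry of the plane, and then transport the classification back.

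First, set $t:=-\mathtt{b}_\tr$. By Lemma~\ref{B_invariant}, $\tr_{+t}$ has barycenter $0$ and $\mathtt{B}_{\tr_{+t}}=\mathtt{B}_\tr\neq0$. By Lemma~\ref{symmetry_curvature}(a), $K(z)=K_{+t}(z+t)$, so the translation $z\mapsto z+t$ maps critical points of $K$ bijectively onto critical points of $K_{+t}$. Being an isometry, it also preserves their type (maximum, minimum, saddle) and distances.

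Next, choose $\theta$ with $e^{2i\theta}\mathtt{B}_\tr\in\R\setminus\{0\}$. If $\imag(\mathtt{B}_\tr)\neq0$ this is Lemma~\ref{symmetry_curvature}(c); otherwise take $\theta=0$. Apply Lemma~\ref{symmetry_curvature}(b) to the already translated triangle: $K_{t\theta}(z)=K_{+t}(e^{-i\theta}z)$. The triangle $\tr_{t\theta}$ then satisfies $\mathtt{b}=0$, $B_2=0$ and $B_1\neq0$, which are exactly the hypotheses of Theorem~\ref{Hessiano_B2=0}. That theorem gives a global maximum at $0$, and two local minima and two saddles on the coordinate axes. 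The minima lie on the $x$-axis if $B_1>0$ and on the $y$-axis if $B_1<0$, and the saddles lie on the other axis. Each pair is of the form $\pm p$, hence equidistant from $0$.

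Finally, pull back. The map $\phi(z)=e^{-i\theta}z-t$ satisfies $K=K_{t\theta}\circ\phi^{-1}$, and $\phi$ is a rigid motion sending $0$ to $\mathtt{b}_\tr$. Take $L_1,L_2$ to be the images under $\phi$ of the axis carrying the minima and of the axis carrying the saddles. Their images are orthogonal lines meeting at $\mathtt{b}_\tr$. The symmetric pairs $\pm p$ go to pairs equidistant from $\mathtt{b}_\tr$, and the classification of critical points is preserved because composing with an isometry preserves the Hessian's signature. Theorem~\ref{critical-points_theorem} ensures there are no other critical points. This matches the direct computation: the lines are those of the degenerate conic \eqref{degenerated_conic}, of slopes $m_1,m_2$ with $m_1m_2=-1$.

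The only delicate step is keeping the sign and direction conventions of the rotation straight, in particular that $\mathtt{B}$ transforms by $e^{2i\theta}$. The rest is bookkeeping. I would also note that the global maximum claim transfers directly, since $K_{t\theta}(0)$ is the global maximum value of $K_{t\theta}$, and $K$ takes the same values at corresponding points.
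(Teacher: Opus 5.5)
Your proposal is correct and follows the paper's own argument. You translate by $-\mathtt{b}_\tr$, rotate so that $\imag(\mathtt{B})=0$ via Lemma~\ref{symmetry_curvature}, invoke Theorem~\ref{Hessiano_B2=0}, and transport the classification back by the rigid motion $\phi$. Your explicit pull-back via $\phi$, and your remark that isometries preserve the signature of the Hessian, make precise what the paper leaves implicit.
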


Geometrically, the critical points of the Gaussian curvature $K$ determine a rhombus whose diagonals intersect at the barycenter of the triangle.

\section[Critical Quadrilaterals and the Siebeck-Marden Theorem]{Critical Quadrilaterals and the Siebeck-Marden Theorem}
\label{critical_quadrilaterals}

In this section, we analyze geometrical configurations of quadrilaterals determined by the critical points of the Gaussian curvature $K$ as prescribed by Theorem~\ref{type_critical-points}. We will call these \textsf{critical quadrilaterals}.

\subsection[Critical Quadrilaterals]{Critical Quadrilaterals}
\label{critical-quadrilaterals}

In the case $B_2=0,B_1\neq 0$, the critical points determine a quadrilateral $\mathfrak{Q}_\tr$ whose diagonals are parallel to the coordinate axes and they intersect at the barycenter of $T$. The diagonals of such a quadrilateral are:
\begin{equation}\label{critical_diagonals_B2=0}
\begin{aligned}
d_1 &=\frac{2}{\sqrt{15}} \sqrt{\sqrt{81 B_1^2 + 5} + 6B_1}, \\
d_2 &=\frac{2}{\sqrt{15}} \sqrt{\sqrt{81 B_1^2 + 5} - 6B_1}.
\end{aligned}
\end{equation}

More generally: when $\mathtt{B}_{\tr}\neq 0$, the critical points determine a quadrilateral $\mathfrak{Q}_\tr$ whose diagonals form an angle of $\pi/2$ and intersect at the barycenter $\mathtt{b}_{\tr}$ of $T$. Moreover, $\mathfrak{Q}_\tr$ is a quadrilateral with diagonals of lengths $d_1$ and $d_2$, and sides of length $\ell_{\mathfrak{Q}_\tr}$ given by:
\begin{equation}\label{critical_diagonals}
\begin{aligned}
d_1 &=\frac{2}{\sqrt{15}} \sqrt{\sqrt{81\abs{\mathtt{B}_{\tr}}^2+5}
+6\abs{\mathtt{B}_{\tr}}}, \\
d_2 &=\frac{2}{\sqrt{15}} \sqrt{\sqrt{81\abs{\mathtt{B}_{\tr}}^2+5}
-6\abs{\mathtt{B}_{\tr}}},
\end{aligned}
\end{equation}
\[ \ell_{\mathfrak{Q}_\tr}=\frac{\sqrt{2}}{\sqrt{15}} \sqrt[4]{ 81 \abs{\mathtt{B}_{\tr}}^2 + 5 }. \]

\subsection[Invariance Properties]{Invariance Properties}
\label{invariance-properties_quadrilaterals}

We study here invariant properties of critical quadrilaterals, specifically, their invariance under translations and rotations.

Since $\mathtt{b}_{\tr_{+t}}=\mathtt{b}_{\tr}+t$, Theorem~\ref{critical-points_theorem} and Lemma~\ref{B_invariant} imply
\[ \mathfrak{Q}_{\tr_{+t}} = t + \mathfrak{Q}_{\tr}. \]
That is, the translated critical quadrilateral is the original critical quadrilateral translated by $t$.

Suppose $\mathtt{B}_{\tr}\neq 0$. If $d_1,d_2$ are the diagonals of $\mathfrak{Q}_{\tr}$ and $d_1^\theta,d_2^\theta$ are the diagonals of $\mathfrak{Q}_{\tr_\theta}$, Lemma~\ref{B_invariant} implies that
\[
\abs{\mathtt{B}_{\tr_\theta}} = \abs{e^{2i\theta} \mathtt{B}_{\tr}} = \abs{\mathtt{B}_{\tr}}.
\]

By \eqref{critical_diagonals}, it follows that:
\[  d_1^\theta = d_1, \quad d_2^\theta = d_2. \]
That is, the quadrilaterals $\mathfrak{Q}_{\tr}$ and $\mathfrak{Q}_{\tr_\theta}$ are congruent. Hence, we have the following:

\begin{theorem}
\label{Asignation_triangles-rhombus}
If $T=\tr(a,b,c)$ and $T'=\tr(a',b',c')$ are two congruent triangles such that $\mathtt{B}_\tr \neq 0$, then the critical quadrilaterals $\mathfrak{Q}_\tr$ and $\mathfrak{Q}_{\tr'}$ are congruent.
\end{theorem}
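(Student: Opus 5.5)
Congruent triangles differ by an orientation-preserving rigid motion of the plane, possibly preceded by a reflection. The plan is to handle the orientation-preserving part with the invariance results already established, and the reflection separately.

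\emph{Orientation-preserving motions.} Write the congruence as $z\mapsto e^{i\theta}z+t$, possibly after relabeling the vertices; this is harmless because $\mathtt{b}_\tr$ and $\mathtt{B}_\tr$ are symmetric in $a,b,c$. Then $T'=(T_\theta)_{+t}$. By Lemma~\ref{B_invariant}, $\mathtt{B}_{\tr'}=e^{2i\theta}\mathtt{B}_\tr$, so $\abs{\mathtt{B}_{\tr'}}=\abs{\mathtt{B}_\tr}\neq 0$. Combining this with the translation relation $\mathfrak{Q}_{\tr_{+t}}=t+\mathfrak{Q}_\tr$ reduces everything to the rotation $T\mapsto T_\theta$.

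For the rotation I would not rely only on the diagonal lengths; I would argue directly from the curvature. Lemma~\ref{symmetry_curvature}(b) gives $K_\theta(z)=K(e^{-i\theta}z)$. Hence $z$ is a critical point of $K_\theta$ if and only if $e^{-i\theta}z$ is a critical point of $K$, and the Hessian type is preserved. So the critical set of $K_\theta$ is the rotation by $e^{i\theta}$ of the critical set of $K$, and $\mathfrak{Q}_{\tr_\theta}=e^{i\theta}\mathfrak{Q}_\tr$. The translation case works the same way from Lemma~\ref{symmetry_curvature}(a).

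\emph{Reflections.} Up to a rotation and a translation, which are already handled, it suffices to treat complex conjugation $T\mapsto\bar T=\tr(\bar a,\bar b,\bar c)$. Then $\mathtt{b}_{\bar\tr}=\overline{\mathtt{b}_\tr}$ and $\mathtt{B}_{\bar\tr}=\overline{\mathtt{B}_\tr}$. Substituting these into the closed formula for $K$ yields
\[
K_{\bar\tr}(z)=K(\bar z),
\]
because the modulus is invariant under conjugation. So the critical set is reflected, and $\mathfrak{Q}_{\bar\tr}=\overline{\mathfrak{Q}_\tr}$, which is congruent to $\mathfrak{Q}_\tr$.

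\emph{Cross-check and main obstacle.} As an alternative check, \eqref{critical_diagonals} shows that the diagonals $d_1,d_2$ depend only on $\abs{\mathtt{B}_\tr}$. The diagonals are also perpendicular and bisect each other at $\mathtt{b}_\tr$. A rhombus is determined up to congruence by its two diagonal lengths, so equal $\abs{\mathtt{B}_\tr}$ alone already gives the conclusion. The only step needing care is keeping track of the vertex relabeling and the reflection case; the symmetry of $\mathtt{b}_\tr$ and $\mathtt{B}_\tr$ in $a,b,c$ and the conjugation identity settle both.
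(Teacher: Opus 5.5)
Your proof is correct, and its main line differs from the paper's. The paper works only with the explicit formulas. Translations give $\mathfrak{Q}_{\tr_{+t}}=t+\mathfrak{Q}_\tr$. For rotations it observes $\abs{\mathtt{B}_{\tr_\theta}}=\abs{\mathtt{B}_\tr}$, so by \eqref{critical_diagonals} the diagonals $d_1,d_2$ are unchanged and the two rhombi are congruent; this is exactly your ``cross-check''.

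Your main argument is instead a transport-of-structure argument. From $K_{+t}(z)=K(z-t)$, $K_\theta(z)=K(e^{-i\theta}z)$ and $K_{\bar\tr}(z)=K(\bar z)$ you get $K_{\tr'}=K_\tr\circ g^{-1}$ for the isometry $g$ carrying $T$ to $T'$. Hence $\mathfrak{Q}_{\tr'}=g(\mathfrak{Q}_\tr)$, with the types of the critical points preserved. This gains three things:
\begin{enumerate}
\item It names the congruence explicitly.
\item It does not rely on the explicit computation of the critical points, so it would apply verbatim to polynomials of any degree.
\item It covers orientation-reversing congruences. The paper's proof treats only translations and rotations, although its argument extends at once because $\mathtt{B}_{\bar\tr}=\overline{\mathtt{B}_\tr}$ has the same modulus.
\end{enumerate}

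In return, the paper's route gives a stronger statement in another direction. The congruence class of $\mathfrak{Q}_\tr$ depends only on $\abs{\mathtt{B}_\tr}$, so non-congruent triangles with equal $\abs{\mathtt{B}_\tr}$ also have congruent critical rhombi. The paper uses exactly this for the isosceles family $T_{\lambda,t}$.
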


\subsection[Critical Quadrilaterals and the Siebeck-Marden Theorem]{Critical Quadrilaterals and the Siebeck-Marden Theorem}
\label{critical-quadrilaterals_Siebeck-Marden}

According to the Siebeck-Marden Theorem, the roots
$z_1=\mathtt{b}_{\tr} + \sqrt{\mathtt{B}_{\tr}}$ and $z_2=\mathtt{b}_{\tr} - \sqrt{\mathtt{B}_{\tr}}$ of $f'(z)$ are the foci of the ellipse that is tangent to the midpoints of the sides of $T$. The zero of the second derivative $z=\mathtt{b}_{\tr}$ is the midpoint of the segment $[z_1,z_2]$, 
\begin{figure}[!htbp]
\centering
\scalebox{1}{%
\begin{tikzpicture}
\begin{axis}[
    axis equal image,
    xmin=-1.45, xmax=1.45,
    ymin=-0.25, ymax=1.65,
    axis lines=center,
    xlabel={}, ylabel={},
    ticks=none,
]
\addplot[thick,blue] coordinates {
    (-1.25,0)
    (1.25,0)
    (0.5,1.25)
    (-1.25,0)
};
\addplot[thick,solid,color=red,domain=0:360,samples=300]
    ({0.16667 + 0.74904*cos(x)*cos(10.16157) - 0.40145*sin(x)*sin(10.16157)},
     {0.41667 + 0.74904*cos(x)*sin(10.16157) + 0.40145*sin(x)*cos(10.16157)});
\begin{scope}[
    shift={(axis cs:0,0)},
    x={(axis direction cs:1,0)},
    y={(axis direction cs:0,1)}]
\draw[blue,fill=blue] (-1.25,0) circle (.5ex);
\draw[blue,fill=blue] (1.25,0) circle (.5ex);
\draw[blue,fill=blue] (0.5,1.25) circle (.5ex);
\draw (-1.35,-0.15) node[right,black] {$a$};
\draw (1.15,-0.15) node[right,black] {$b$};
\draw (0.55,1.25) node[right,black] {$c$};
\draw[thick,color=orange] (-0.75,0.2522) -- (1.15,0.5929);
\draw[thick,color=orange] (0.26822,-0.15) -- (0.0352,1.15);
\draw[thick,color=green,densely dotted] (0.8213,0.5340) -- (0.2284,0.07213);
\draw[thick,color=green,densely dotted] (0.8213,0.5340) -- (0.1049,0.7612);
\draw[thick,color=green,densely dotted] (-0.4880,0.2993) -- (0.2284,0.07213);
\draw[thick,color=green,densely dotted] (-0.4880,0.2993) -- (0.1049,0.7612);
\draw[green,fill=green] (0.8213,0.5340) circle (.35ex);
\draw[green,fill=green] (-0.4880,0.2993) circle (.35ex);
\draw[green,fill=green] (0.2284,0.07213) circle (.35ex);
\draw[green,fill=green] (0.1049,0.7612) circle (.35ex);
\draw[black,fill=black] (0.7892,0.5283) circle (.4ex);
\draw[black,fill=black] (-0.4559,0.301) circle (.4ex);
\draw[red,fill=red] (0,0) circle (.35ex);
\draw[red,fill=red] (-0.375,0.625) circle (.35ex);
\draw[red,fill=red] (0.875,0.625) circle (.35ex);
\draw[black,fill=black] (0.1666,0.4166) circle (.4ex);
\end{scope}
\end{axis}
\end{tikzpicture}
}
\caption{Siebeck-Marden ellipse associated to $T$}
\label{fig:S-M_ellipse}
\end{figure}

\subsubsection[Concentric Circumferences]{Concentric Circumferences}
\label{concentric_circumferences}

When triangle $T$ is equilateral with side length $\ell$, its Siebeck-Marden ellipse is a circumference with center $\mathtt{b}_{\tr}$ and radius $\sqrt{3}\ell/6$, and the critical circumference has center  $\mathtt{b}_{\tr}$ and radius $1/\sqrt[4]{45}$. Therefore, these two circumferences are concentric.

Figure \ref{fig:equilateral}(A) shows an equilateral triangle ($B_\tr=0$) in blue, with the critical circumference in green. The Siebeck-Marden ellipse for this triangle is depicted in red in Figure~\ref{fig:equilateral}(B) (a circumference in this case), the black dot corresponding to the barycenter. The circumferences are concentric (Theorem~\ref{critical-points_theorem}(1)).
\begin{figure}[!htbp]
\centering
\begin{subfigure}[t]{0.48\textwidth}
\centering
\includegraphics[scale=0.14]{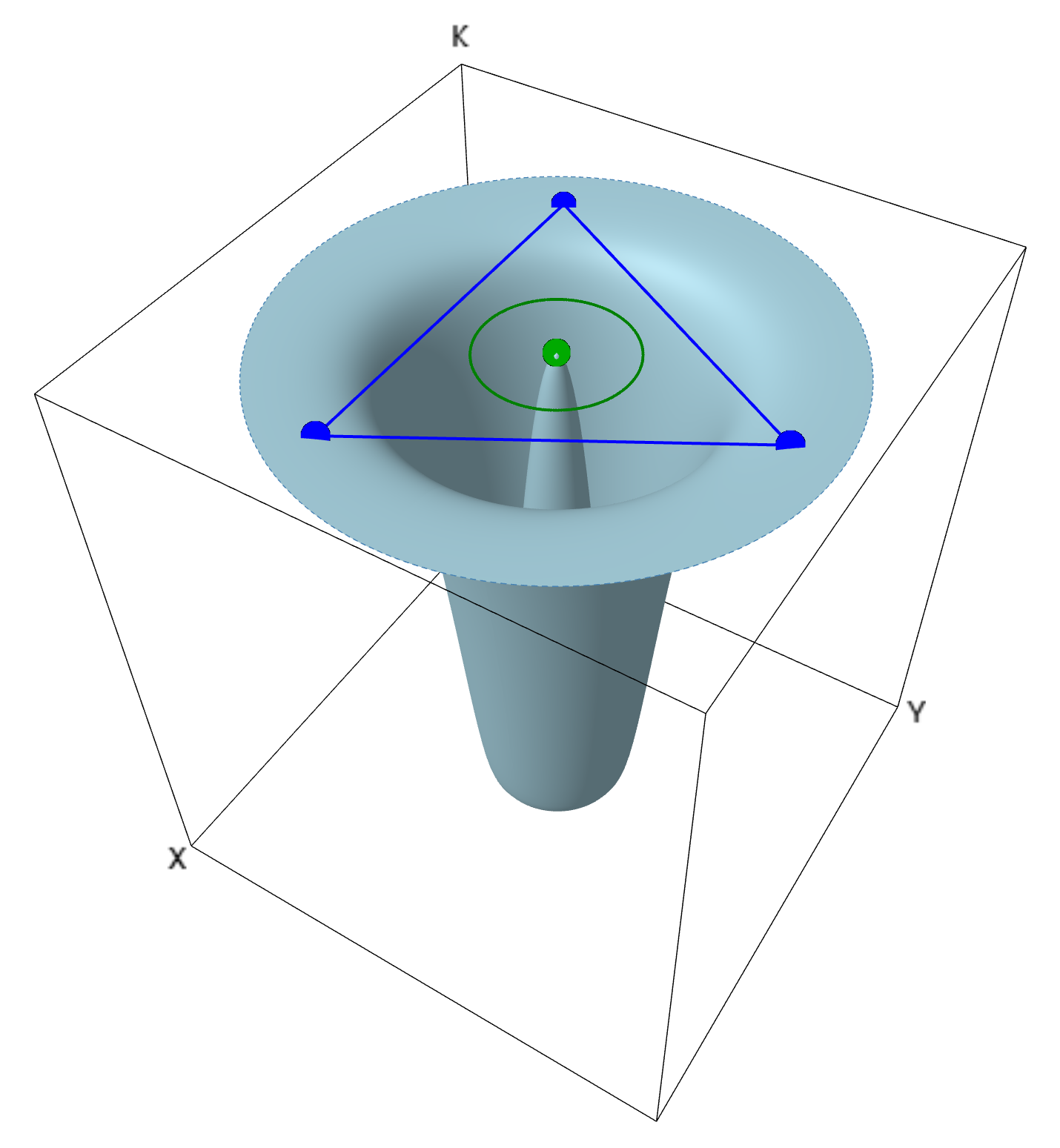}
\caption{$K(x,y)$ for equilateral $\tr$}
\end{subfigure}
\begin{subfigure}[t]{0.48\textwidth}
\centering
\raisebox{1cm}{
\scalebox{0.6}{
\begin{tikzpicture}
\begin{axis}[
    axis equal image,
    xmin=-1.15, xmax=1.2,
    ymin=-0.2, ymax=2.05,
    axis lines=center,
    xlabel={}, ylabel={},
    ticks=none,
]
\addplot[thick,blue] coordinates {
    (-1,0)
    (1,0)
    (0,1.732)
    (-1,0)
};
\addplot[thick,solid,color=red,domain=0:360,samples=300]
    ({0.57735*cos(x)},
     {0.57735 + 0.57735*sin(x)});
\addplot[thick,solid,color=green,domain=0:360,samples=300]
    ({0.386*cos(x)},
     {0.5773 + 0.386*sin(x)});
\begin{scope}[
    shift={(axis cs:0,0)},
    x={(axis direction cs:1,0)},
    y={(axis direction cs:0,1)}]
\draw[blue,fill=blue] (-1,0) circle (.5ex);
\draw[blue,fill=blue] (1,0) circle (.5ex);
\draw[blue,fill=blue] (0,1.732) circle (.5ex);
\draw (-1.1,-0.15) node[right,black] {$a$};
\draw (0.9,-0.15) node[right,black] {$b$};
\draw (0.05,1.732) node[right,black] {$c$};
\draw[red,fill=red] (0,0) circle (.35ex);
\draw[red,fill=red] (0.5,0.8660) circle (.35ex);
\draw[red,fill=red] (-0.5,0.8660) circle (.35ex);
\draw[black,fill=black] (0,0.5773) circle (.5ex);
\end{scope}
\end{axis}
\end{tikzpicture}
}
}
\caption{$T=\triangle\big(-1,1,\sqrt{3}i\big)$}
\end{subfigure}
\caption{Curvature, critical circumference, and S-M ellipse for an equilateral triangle}
\label{fig:equilateral}
\end{figure}

\subsubsection[Case $B_2=0$]{Case $B_2=0$}
\label{concentric_ellipses}

If $B_1>0$, the Siebeck-Marden ellipse has foci at $z_1$ and $z_2$, and the critical points $\mathtt{b}_{\tr}+\widetilde{X}_1$ and $\mathtt{b}_{\tr}+\widetilde{X}_2$ of the Gaussian curvature are located on the major axis, while the other two critical points, $\mathtt{b}_{\tr}+\widetilde{Y}_1$ and $\mathtt{b}_{\tr}+\widetilde{Y}_2$, are located on the minor axis. The center of this ellipse coincides with the barycenter of $T$, where the Gaussian curvature attains its maximum.

According to the analysis in Section~\ref{critical-points_analysis}, the critical points of the Gaussian curvature are located on lines parallel to the coordinate axes, with a maximum at $\mathtt{b}_{\tr}$. Consequently, the major and minor axes of the Siebeck--Marden ellipse are parallel to the coordinate axes.

Figure \ref{fig:isosceles}(A) shows the graph of the Gaussian curvature and its five critical points: the barycenter, two saddle points, and two minima. Figure~\ref{fig:isosceles}(B) depicts the corresponding isosceles triangle in blue, for which $B_2=0$ and $B_1\neq 0$. It also shows the vertices of the critical quadrilateral, joined by green dotted lines, and the S-M ellipse in red, with its foci and center marked by black dots and its axes shown in orange. Note that two of the critical points, shown in green, lie on the focal axis, in agreement with Theorem~\ref{coincidence_diagonals}.
\begin{figure}[!htbp]
\centering
\begin{subfigure}[b]{0.48\textwidth}
\centering
\vspace{0pt}
\includegraphics[scale=0.16]{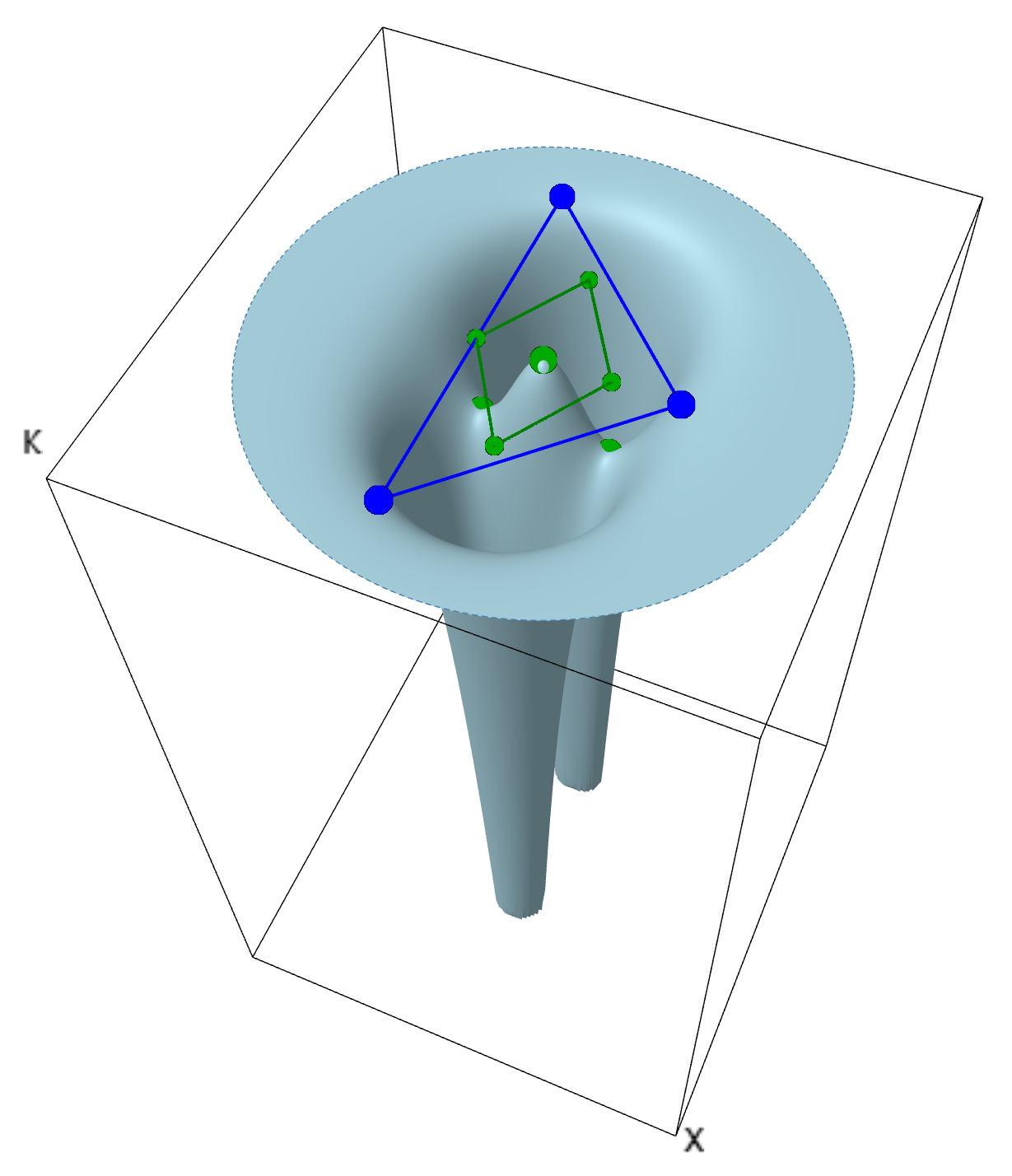}
\caption{$K(x,y)$ for an isosceles $\tr$}
\end{subfigure}
\hfill
\begin{subfigure}[b]{0.48\textwidth}
    \centering
    \vspace{0pt}
    \scalebox{0.85}{%
\begin{tikzpicture}
\begin{axis}[
    axis equal image,
    xmin=-0.35, xmax=1.25,
    ymin=-1.2, ymax=1.2,
    axis lines=center,
    xlabel={}, ylabel={},
    ticks=none,
]
\addplot[thick,blue] coordinates {
    (0,-1)
    (0,1)
    (1,0)
    (0,-1)
};
\addplot[thick,solid,color=red,domain=0:360,samples=300]
    ({0.33333 + 0.57735*cos(x)*cos(90) - 0.33333*sin(x)*sin(90)},
     {0.57735*cos(x)*sin(90) + 0.33333*sin(x)*cos(90)});
\begin{scope}[
    shift={(axis cs:0,0)},
    x={(axis direction cs:1,0)},
    y={(axis direction cs:0,1)}]
\draw[blue,fill=blue] (0,-1) circle (.5ex);
\draw[blue,fill=blue] (0,1) circle (.5ex);
\draw[blue,fill=blue] (1,0) circle (.5ex);
\draw (-0.25,1) node[right,black] {$a$};
\draw (-0.25,-1) node[right,black] {$b$};
\draw (0.95,-0.15) node[right,black] {$c$};
\draw[thick,color=orange] (-0.15,0) -- (0.85,0);
\draw[thick,color=orange] (0.3333,0.9) -- (0.3333,-0.9);
\draw[thick,color=green,densely dotted] (0,0) -- (0.3333,-0.5374);
\draw[thick,color=green,densely dotted] (0,0) -- (0.3333,0.5374);
\draw[thick,color=green,densely dotted] (0.6666,0) -- (0.3333,-0.5374);
\draw[thick,color=green,densely dotted] (0.6666,0) -- (0.3333,0.5374);
\draw[green,fill=green] (0,0) circle (.35ex);
\draw[green,fill=green] (0.6666,0) circle (.35ex);
\draw[green,fill=green] (0.3333,-0.5374) circle (.35ex);
\draw[green,fill=green] (0.3333,0.5374) circle (.35ex);
\draw[black,fill=black] (0.3333,0.4714) circle (.4ex);
\draw[black,fill=black] (0.3333,-0.4714) circle (.4ex);
\draw[red,fill=red] (0,0) circle (.35ex);
\draw[red,fill=red] (0.5,0.5) circle (.35ex);
\draw[red,fill=red] (0.5,-0.5) circle (.35ex);
\draw[black,fill=black] (0.3333,0) circle (.4ex);
\end{scope}
\end{axis}
\end{tikzpicture}
}
\caption{$T=\triangle(i,-i,1)$}
\end{subfigure}
\caption{Curvature, critical points and S-M ellipse for an isosceles triangle}
\label{fig:isosceles}
\end{figure}

\subsubsection[General Case]{General Case}
\label{generic_case}

Let $T=\tr(a,b,c)$ be such that $B_2\neq0$. Recall that $\mathtt{B}_\tr=B_1+iB_2$. By  Theorem~\ref{critical-points_theorem}(3), the slopes $m_1',m_2'$ of the lines through the critical points of the same nature (see Theorem~\ref{type_critical-points}), are given by:
\[
\begin{aligned}
m_1' &= \frac{-B_{1}+\abs{\mathtt{B}_{\tr}}}{B_{2}}, &
m_2' &= \frac{-B_{1}-\abs{\mathtt{B}_{\tr}}}{B_{2}}.
\end{aligned}
\]

The foci $z_1$ and $z_2$ of the Siebeck-Marden ellipse (Eq. \eqref{focal_points}) are:
\[
\begin{aligned}
z_1 &= \mathtt{b}_{\tr} + \sqrt{\mathtt{B}_{\tr}}, &
z_2 &= \mathtt{b}_{\tr} - \sqrt{\mathtt{B}_{\tr}}.
\end{aligned}
\]
Therefore:
\[ \arg(z_1-\mathtt{b}_{\tr}) = \arg\big(\sqrt{\mathtt{B}_{\tr}}\big)
= \tfrac{1}{2} \arg(\mathtt{B}_{\tr}). \]

The focal axis of the Siebeck-Marden ellipse has slope $m_1$ given by:
\begin{align*}
m_1 &= \tan \big( \arg(z_1-\mathtt{b}_{\tr}) \big) \\
&= \tan \big( \tfrac{1}{2}\arg(\mathtt{B}_{\tr}) \big) \\
&= \frac{\sin \big( \arg(\mathtt{B}_{\tr}) \big)}
        {1+\cos \big( \arg(\mathtt{B}_{\tr}) \big)} \\
&= \frac{ \frac{B_2}{ \abs{\mathtt{B}_{\tr}}} }{ 1 + \frac{B_1}{ \abs{\mathtt{B}_{\tr}}} }
= \frac{B_2}{B_1 + \abs{\mathtt{B}_{\tr}} } \\
&= \frac{-B_1+\abs{\mathtt{B}_{\tr}}}{B_2}.
\end{align*}
That is, $m_1'=m_1$ and we have the following:

\begin{theorem}
\label{coincidence_diagonals}
If $f(z)=(z-a)(z-b)(z-c)$ is any cubic polynomial with $\mathtt{B}_\tr\neq 0$, then, the 5 critical points of the Gaussian curvature are located along both axes of the Siebeck-Marden ellipse. In particular, the barycenter corresponds to the center of the ellipse.
\end{theorem}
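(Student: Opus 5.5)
The plan is to show that the two perpendicular lines through $\mathtt{b}_\tr$ that carry the four nontrivial critical points coincide with the focal (major) and minor axes of the Siebeck--Marden ellipse. The ellipse's center is the midpoint of the foci $z_{1,2}=\mathtt{b}_\tr\pm\sqrt{\mathtt{B}_\tr}$, which is $\mathtt{b}_\tr$. By Theorem~\ref{critical-points_theorem}, $\mathtt{b}_\tr$ is itself a critical point, and it is the global maximum by Theorem~\ref{type_critical-points}. This settles the last sentence of the statement, and reduces the problem to identifying two lines through $\mathtt{b}_\tr$. The focal axis is the line through $\mathtt{b}_\tr$ in the direction $\sqrt{\mathtt{B}_\tr}$, and the minor axis is the perpendicular line through $\mathtt{b}_\tr$.

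I will split into cases according to $B_2$. If $B_2\neq 0$, Section~\ref{computation_critical-points} shows that after the translation $w=z-\mathtt{b}_\tr$ the four nonzero critical points $\pm\widetilde Z_1,\pm\widetilde W_1$ lie on the lines $y=m_1x$ and $y=m_2x$, with
\[ m_{1}=\frac{-B_1+\abs{\mathtt{B}_\tr}}{B_2}, \qquad m_{2}=\frac{-B_1-\abs{\mathtt{B}_\tr}}{B_2}, \qquad m_1m_2=-1. \]
The computation just preceding the statement uses the half-angle formula to show that $\tan\bigl(\tfrac12\arg\mathtt{B}_\tr\bigr)=B_2/(B_1+\abs{\mathtt{B}_\tr})$. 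It then rationalizes this, using $B_2^2=\abs{\mathtt{B}_\tr}^2-B_1^2$, to obtain $m_1$. The one point to check is that $B_1+\abs{\mathtt{B}_\tr}\neq0$, and this holds because $B_2\neq0$. Hence the line of slope $m_1$ through $\mathtt{b}_\tr$ is exactly the focal axis. Since $m_1m_2=-1$, the line of slope $m_2$ is perpendicular to it through the center, so it is the minor axis.

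If $B_2=0$ and $B_1\neq0$, the critical points lie on the horizontal and vertical lines through $\mathtt{b}_\tr$ (the points $\widetilde X_1,\widetilde Y_1$). Here $\sqrt{\mathtt{B}_\tr}$ is real when $B_1>0$ and purely imaginary when $B_1<0$. In either case the focal axis is one of these two lines, and the minor axis is the other.

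The main obstacle is only bookkeeping. The branch of $\arg$ and of the square root must be handled consistently, so that the half-angle identity gives the slope of the line through $\pm\sqrt{\mathtt{B}_\tr}$ regardless of the branch; this is harmless, since both roots lie on the same line. The vertical-axis subcases also need care, because there the slopes are undefined. For the general identification I would avoid slopes altogether by using the rotation reduction of Lemma~\ref{symmetry_curvature}(b),(c) together with Lemma~\ref{B_invariant}. Rotating by $\theta$ multiplies $\mathtt{B}_\tr$ by $e^{2i\theta}$ and rotates both the foci and the critical points by $e^{i\theta}$. This reduces everything to the case $B_2=0$, which was already handled above.
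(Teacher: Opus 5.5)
Your proposal is correct and follows the paper's own argument. The paper identifies the center of the ellipse with $\mathtt{b}_\tr$ as the midpoint of the foci $\mathtt{b}_\tr\pm\sqrt{\mathtt{B}_\tr}$, and for $B_2\neq 0$ it uses the same half-angle computation $\tan\bigl(\tfrac12\arg\mathtt{B}_\tr\bigr)=B_2/(B_1+\abs{\mathtt{B}_\tr})=(-B_1+\abs{\mathtt{B}_\tr})/B_2=m_1$, with $m_1m_2=-1$ giving the minor axis. You also check that $B_1+\abs{\mathtt{B}_\tr}\neq 0$ and treat both signs of $B_1$ when $B_2=0$, whereas the paper spells out only $B_1>0$; these are small but welcome additions.
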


Figure \ref{fig:scalene}(A) and (B) show a triangle in general position (in blue) with $B_2\neq 0$, and the five critical points of the Gaussian Curvature in green (Theorem~\ref{critical-points_theorem}(3)). The Siebeck-Marden ellipse is in red. Two of the critical points (green) are located on the focal axis as indicated by Theorem~\ref{coincidence_diagonals}. The distance from each focus to its nearest critical point is approximately $0.032$, which is why they appear to coincide in the figure.
\begin{figure}[!htbp]
\centering
\begin{subfigure}[b]{0.45\textwidth}
\centering
\includegraphics[scale=0.175]{curvature_scalene.png}
\caption{$K(x,y)$ for a scalene $\tr$}
\end{subfigure}
\begin{subfigure}[b]{0.45\textwidth}
\centering
\raisebox{1.5cm}{%
\resizebox{\linewidth}{!}{
\begin{tikzpicture}
\begin{axis}[
    axis equal image,
    xmin=-1.45, xmax=1.45,
    ymin=-0.25, ymax=1.65,
    axis lines=center,
    xlabel={}, ylabel={},
    ticks=none,
]
\addplot[thick,blue] coordinates {
    (-1.25,0)
    (1.25,0)
    (0.5,1.25)
    (-1.25,0)
};
\addplot[thick,solid,color=red,domain=0:360,samples=300]
    ({0.16667 + 0.74904*cos(x)*cos(10.16157) - 0.40145*sin(x)*sin(10.16157)},
     {0.41667 + 0.74904*cos(x)*sin(10.16157) + 0.40145*sin(x)*cos(10.16157)});
\begin{scope}[
    shift={(axis cs:0,0)},
    x={(axis direction cs:1,0)},
    y={(axis direction cs:0,1)}]
\draw[blue,fill=blue] (-1.25,0) circle (.5ex);
\draw[blue,fill=blue] (1.25,0) circle (.5ex);
\draw[blue,fill=blue] (0.5,1.25) circle (.5ex);
\draw (-1.35,-0.15) node[right,black] {$a$};
\draw (1.15,-0.15) node[right,black] {$b$};
\draw (0.55,1.25) node[right,black] {$c$};
\draw[thick,color=orange] (-0.75,0.2522) -- (1.15,0.5929);
\draw[thick,color=orange] (0.26822,-0.15) -- (0.0352,1.15);
\draw[thick,color=green,densely dotted] (0.8213,0.5340) -- (0.2284,0.07213);
\draw[thick,color=green,densely dotted] (0.8213,0.5340) -- (0.1049,0.7612);
\draw[thick,color=green,densely dotted] (-0.4880,0.2993) -- (0.2284,0.07213);
\draw[thick,color=green,densely dotted] (-0.4880,0.2993) -- (0.1049,0.7612);
\draw[green,fill=green] (0.8213,0.5340) circle (.35ex);
\draw[green,fill=green] (-0.4880,0.2993) circle (.35ex);
\draw[green,fill=green] (0.2284,0.07213) circle (.35ex);
\draw[green,fill=green] (0.1049,0.7612) circle (.35ex);
\draw[black,fill=black] (0.7892,0.5283) circle (.4ex);
\draw[black,fill=black] (-0.4559,0.301) circle (.4ex);
\draw[red,fill=red] (0,0) circle (.35ex);
\draw[red,fill=red] (-0.375,0.625) circle (.35ex);
\draw[red,fill=red] (0.875,0.625) circle (.35ex);
\draw[black,fill=black] (0.1666,0.4166) circle (.4ex);
\end{scope}
\end{axis}
\end{tikzpicture}%
}%
}%
\caption{$\triangle(a,b,c)=\triangle\big(\tfrac{-5}{4},\tfrac{5}{4},\tfrac{1}{2}+\tfrac{5i}{4}\big)$}
\end{subfigure}%
\caption{Curvature, critical points, and Siebeck-Marden ellipse for a scalene triangle}
\label{fig:scalene}
\end{figure}

\section[The Skew and the Composed Pythagorean Mean]{The Skew and the Composed Pythagorean Mean}
\label{skew_composed-pythagorean-mean}

Hubbard \cite{Hub} studied quasiconformal mappings of the plane using the skew of a given triangle (see also \cite{AHH}). Here, we compare this notion with the quantity $\mathtt{B}_{\tr}$, which has played a fundamental role in this work.

Given the triangle $T=\triangle(a,b,c)$ associated with the cubic polynomial
$f(z)=(z-a)(z-b)(z-c)$, and $A=a+b+c$, $B=ab+ac+bc$ and $C=abc$, we can write
\[ f(z) = z^3-Az^2+Bz-C \]
and we have defined:
\begin{itemize}
\item $\mathtt{b}_\tr=A/3$, the barycenter of $T$.
\item $\mathtt{B}_{\tr}=\mathtt{b}_{\tr}^2 - B/3$.
\end{itemize}

In the next subsection, we give a geometric interpretation of $\mathtt{B}_{\tr}$.

\subsection[The Composed Pythagorean Mean]{The Composed Pythagorean Mean}
\label{composed_Pythagorean-mean}

Recall that the \textsf{harmonic mean} of three nonzero complex numbers $a,b,c$ is given by
\[ H(a,b,c) := \frac{3}{\frac{1}{a}+\frac{1}{b}+\frac{1}{c}}, \]
which can be written as
\[ H(a,b,c) = \frac{3C}{B}. \]
So,
\[ \frac{B}{3} = \frac{G(a,b,c)^3}{H(a,b,c)}, \]
where $G(a,b,c)=\sqrt[3]{C}=\sqrt[3]{abc}$ is the \textsf{geometric mean} of $a,b,c$.

Now, from the definition of $\mathtt{B}_{\tr}$ we obtain
\[ \mathtt{B}_{\tr} = M(a,b,c)^2 - \frac{G(a,b,c)^3}{H(a,b,c)}, \]
where $M(a,b,c)$ is the \textsf{arithmetic mean} of $a,b,c$. We call $\mathtt{B}_{\tr}$ the \textsf{composed Pytha\-gorean mean}, since it contains the three classical notions of Pythagorean means.

\subsection[A Family of Cubic Polynomials with Real $\mathtt{B}_\tr$]{A Family of Cubic Polynomials with Real $\mathtt{B}_\tr$}
\label{family_B2_0}

Consider the family of cubic polynomials $f_{\lambda,t}(z)=(z-it)(z+it)(z-\lambda)$ with $\lambda,t\in \R$ and $t>0$. Associated with this family there is a family of isosceles triangles $T_{\lambda,t}$ in the plane, with vertices $it$, $-it$, and $\lambda$. For this family we have $A=\lambda$ and $B=t^2$, so:
\[ \mathtt{B}_{\tr}=\frac{A^2-3B}{9}=\frac{\lambda^2-3t^2}{9} \in \mathbb{R}. \]
That is, $B_2=0$ and $\mathtt{B}_{\tr}=B_1$ is real.

Critical quadrilaterals, associated with the family of Gaussian curvatures $K_{\lambda,t}$, obtained from $f_{\lambda,t}$, are rhombuses. A rhombus is a simple convex quadrilateral with four equal sides or, equivalently, one whose diagonals are perpendicular bisectors of each other. Two rhombuses $\mathcal{R},\mathcal{R}'$ with diagonals $d_i,d_i',\ (i=1,2)$, respectively, are congruent if, and only if, they have the same diagonals, i.e., $d_1=d_1'$ and $d_2=d_2'$. We also consider the case $f_{0,t}(z)=(z-it)(z+it)$, with degenerate triangles $T_{0,t}$.

Figure \ref{fig:family_B2_0} depicts a $(\lambda,t)$-plane in which each point represents an isosceles triangle $T_{\lambda,t}=\tr(it,-it,\lambda)$, with $t>0$ and $\lambda\in \R$. For example, the triangle in Figure~\ref{fig:isosceles}(B) corresponds to the point $P=(1,1)$ and the value of $\mathtt{B}_{\tr}$ is $-2/9$. The regions where $\mathtt{B}_{\tr}>0$ are shown in orange, and the region where $\mathtt{B}_{\tr}<0$ is shown in green. The black lines $\{\lambda=\pm \sqrt{3}t\}$ correspond to triangles with $\mathtt{B}_{\tr}=0$, i.e., equilateral triangles. The green and orange hyperbolas are level curves of $\mathtt{B}_{\tr}$, i.e., curves with equation $\{\lambda^2-3t^2=9\mathtt{B}_{\tr}\}$ along which $\mathtt{B}_{\tr}$ is constant.

Thus, generic distinct points on a fixed green hyperbola correspond to non-congruent isosceles triangles whose associated rhombuses are congruent (Equations \eqref{critical_diagonals}). The same statement holds for the orange hyperbolas. Any two points in the plane, symmetric with respect to the vertical axis $\{\lambda=0\}$, correspond to congruent triangles.
\begin{figure}[!htbp]
\centering
\begin{tikzpicture}[domain=0:2.5]
\filldraw[thick, draw=yellow!80!black, fill=yellow!80!black, fill opacity=0.4]
  (0,0) -- (4,0) -- (4,2.3094) -- cycle;
\filldraw[thick, draw=yellow!80!black, fill=yellow!80!black, fill opacity=0.4]
  (0,0) -- (-4,0) -- (-4,2.3094) -- cycle;
\filldraw[thick, color=green!80!black, fill opacity=0.4]
  (0,0) -- (4,2.3094) -- (-4,2.3094) -- cycle;

\begin{scope}
  \clip (0,0) -- (4,0) -- (4,2.3094) -- cycle;
  \foreach \k/\col in {0.65/orange!90!black, 2.8/orange!90!black,
                        6.4/orange!90!black,  11.5/orange!90!black}{
    \pgfmathsetmacro{\tmax}{sqrt((16-\k)/3)}
    \draw[thick, color=\col, domain=0:\tmax, samples=150, variable=\t]
      plot ({sqrt(3*\t*\t + \k)}, {\t});
  }
\end{scope}
\begin{scope}
  \clip (0,0) -- (-4,0) -- (-4,2.3094) -- cycle;
  \foreach \k/\col in {0.65/orange!90!black, 2.8/orange!90!black,
                        6.4/orange!90!black,  11.5/orange!90!black}{
    \pgfmathsetmacro{\tmax}{sqrt((16-\k)/3)}
    \draw[thick, color=\col, domain=0:\tmax, samples=150, variable=\t]
      plot ({-sqrt(3*\t*\t + \k)}, {\t});
  }
\end{scope}

\foreach \k/\col in {0.6/green!50!black, 2/green!50!black,
                      5.35/green!50!black,   10.2/green!50!black}{
  \pgfmathsetmacro{\lammax}{min(4, sqrt(3*2.3094*2.3094 - \k))}
  \draw[thick, color=\col,
        domain=-\lammax:\lammax, samples=200, variable=\lam]
    plot ({\lam}, {sqrt(\lam*\lam/3 + \k/3)});
}

\draw[very thick](0,0) -- ( 4, 2.3094);
\draw[very thick](0,0) -- (-4, 2.3094);
\draw[->] (-4.25,0) -- (4.25,0) node[right] {$\lambda$};
\draw[->] (0,-0.5)  -- (0,2.85) node[above] {$t$};
\filldraw[fill=white, draw=black, thick] (0,0) circle (2pt);

\foreach \x in {-4,-3,-2,-1,1,2,3,4}{
  \draw[thick](\x,0.05) -- (\x,-0.05);
}
\foreach \y in {1,2}{
  \draw[thick](0.05,\y) -- (-0.05,\y);
}

\draw(-0.45,-0.3) node[right, black] {$0$};
\foreach \x in {1,2,3,4}{
  \draw(-0.2+\x,-0.3) node[right, black] {$\x$};
}
\draw(-0.35-1,-0.3) node[right, black] {$-1$};
\draw(-0.35-2,-0.3) node[right, black] {$-2$};
\draw(-0.35-3,-0.3) node[right, black] {$-3$};
\draw(-0.35-4,-0.3) node[right, black] {$-4$};

\draw(-0.55,+1) node[right, black] {$1$};
\draw(-0.55,+2) node[right, black] {$2$};

\draw(0.2,2.6) node[right, black] {$\mathtt{B}_{\tr}<0$};
\draw(4.1,1) node[right, black] {$\mathtt{B}_{\tr}>0$};
\draw(-5.4,1) node[right, black] {$\mathtt{B}_{\tr}>0$};
\draw( 3.4,2.55) node[right, black] {$\lambda=\sqrt{3}t$};
\draw(-4.55,2.55) node[right, black] {$\lambda=-\sqrt{3}t$};

\draw[blue,fill=blue] (1,1) circle (.35ex);
\draw(0.75,1.25) node[right, black] {$P$};
\end{tikzpicture}
\caption{Family of isosceles triangles $(\lambda,t) \longleftrightarrow T_{\lambda,t}=\tr(it,-it,\lambda)$}
\label{fig:family_B2_0}
\end{figure}

\subsection[The Skew and the Composed Pythagorean Mean]{The Skew and the Composed Pythagorean Mean}
\label{skew_composed-mean}

We close this section with a comparison of the notion of the skew of $T$ with the composed Pythagorean mean. Recall that $T$ is equilateral if and only if $\mathtt{B}_\tr=0$.

The \textsf{skew} of the triangle $T$ measures how far the triangle is from being equilateral. If
$L(T):= \max\{\abs{a-b}:a,b \text{ are vertices of } T\}$ and
$\ell(T):= \min\{\abs{a-b}:a,b \text{ are vertices of } T\}$, define
\[ \mathrm{skew}(T) = \frac{L(T)}{\ell(T)} \]
and note that
\[ T \text{ is equilateral } \quad \Longleftrightarrow \quad \mathrm{skew}(T) = 1. \]

So,
\[
T \text{ is equilateral } \quad \Longleftrightarrow \quad \mathtt{B}_\tr = 0
\quad \Longleftrightarrow \quad \mathrm{skew}(T) = 1.
\]

\begin{example}[The family $f_{\lambda,t}$]
	
For the family $f_{\lambda,t}$ with associated family of triangles $T_{\lambda,t}$, we have
\[
\mathrm{skew}(T_{\lambda,t}) =
\frac{ \max \big\{2t,\sqrt{\lambda^2+t^2} \big\}}{\min \big\{2t,\sqrt{\lambda^2+t^2} \big\} }.
\]

If
\[ \mathrm{skew}(T_{\lambda,t}) = \frac{\sqrt{\lambda^2+t^2}}{2t} \]
then

\begin{align*}
\mathrm{skew}(T_{\lambda,t})^2 - 1 &= \frac{\lambda^2+t^2}{4t^2} - 1 \\
&= 9\frac{\mathtt{B_\tr}}{\ell_3^2}	
\end{align*}
	
Hence,
\[ \mathrm{skew}(T_{\lambda,t})^2 = 1 + 9\frac{\mathtt{B_\tr}}{\ell_3^2} \]
where $\ell_3$ is the smallest side of the triangle. On the other hand, if
\[ \mathrm{skew}(T_{\lambda,t}) = \frac{2t}{\sqrt{\lambda^2+t^2}} \]
then
\begin{align*}
\mathrm{skew}(T_{\lambda,t})^2 - 1 &= \frac{4t^2}{\lambda^2+t^2} - 1 \\
&= 9\frac{-\mathtt{B_\tr}}{\ell_1^2}	
\end{align*}
and therefore,
\[ \mathrm{skew}(T_{\lambda,t})^2 = 1 + 9\frac{-\mathtt{B_\tr}}{\ell_1^2} \]
where $\ell_1$ is the smallest side of the triangle.
	
In either case,
\[ \mathrm{skew}(T_{\lambda,t})^2 = 1 + C(\ell)\abs{\mathtt{B_\tr}} \]
where $\displaystyle C(\ell)=\frac{9}{\ell^2}$, with $\ell$ the smallest side of the triangle.
\end{example}

The study presented in the last section allows us to conjecture:

\begin{conjecture}[The Skew Conjecture]
\label{skew_conjecture}	
Given a cubic polynomial $f(z)$ with nondegenerate triangle of zeros $T=\tr(a,b,c)$ in the complex plane, the skew of $T$ can be approximated by $\abs{\mathtt{B}_\tr}$, in the sense that
\[ \mathrm{skew}(T)^2 \approx 1 + C(\ell) \abs{\mathtt{B}_\tr}, \]
where $\ell$ is the smallest side of $T$ and the constant $C(\ell)$ depends only on $\ell$.
\end{conjecture}

\section*{Appendix}
\subsection*{Two Technical Lemmas}

We present here the proof of two technical results, Lemmas~\ref{sign_Hess} and \ref{sign_Kxx}, used in Section~\ref{hessians_critical-points}. All computations were performed using SageMath \cite{Sage}.

\begin{lemma*}[\ref{sign_Hess}]
From the identities \eqref{HessK(x,0)} and \eqref{HessK(0,y)}, it follows that:
\begin{enumerate}
\item[(a)] If $B_1>0$, then $P_1(X_1)<0$, $P_2(X_1)>0$, $P_3(X_1)>0$. Moreover, $Q_1(Y_1)<0$, $Q_2(Y_1)<0$, $Q_3(Y_1)>0$. Consequently,
\[\Hess(K(X_1,0))>0\quad \text{and}\quad\Hess(K(0,Y_1))<0. \]
\item[(b)] If $B_1<0$, then $P_1(X_1)<0$, $P_2(X_1)<0$, $P_3(X_1)>0$. Moreover, $Q_1(Y_1)<0$, $Q_2(Y_1)>0$, $Q_3(Y_1)>0$. Consequently,
\[\Hess(K(X_1,0))<0, \qquad\Hess(K(0,Y_1))>0. \]
\end{enumerate}
\end{lemma*}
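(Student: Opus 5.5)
The plan is to evaluate the polynomials $P_i$ and $Q_i$ at the critical abscissae by eliminating $x^4$ through the critical-point equations rather than working with the nested radicals directly. Put $s=X_1^2$ and $r=Y_1^2$. By the computation in Section~\ref{isosceles_triangles}, $s$ and $r$ are the positive roots of
\[ 45s^2-36B_1s-(1+9B_1^2)=0, \qquad 45r^2+36B_1r-(1+9B_1^2)=0. \]
Since $P_1,P_2,P_3$ are even in $x$, they are polynomials in $s$. I will reduce them modulo the quadratic $45s^2=36B_1s+1+9B_1^2$ to linear expressions $\alpha(B_1)s+\beta(B_1)$. The substitution $B_1\mapsto -B_1$, $x\mapsto y$ carries $P_i$ to $Q_i$ and $X_1$ to $Y_1$. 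So it suffices to prove sign statements for $P_i(X_1)$ for \emph{all} real $B_1\neq 0$; the $Q_i$ statements then follow by replacing $B_1$ with $-B_1$.

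The sign of $P_3(X_1)$ is immediate: $P_3=9(x^2-B_1)^2+1>0$. For $P_2$, the reduction gives
\[ P_2(X_1)=(36B_1s+1+9B_1^2)+72B_1s-9B_1^2-1=108B_1 s, \]
whose sign is that of $B_1$. This yields $P_2(X_1)>0$ when $B_1>0$ and $P_2(X_1)<0$ when $B_1<0$. Correspondingly $Q_2(Y_1)=-108B_1r$ has the opposite sign, which matches the lemma.

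The main obstacle is $P_1(X_1)<0$ for every $B_1\neq0$. I plan to reduce $P_1$ modulo the quadratic, using $s^4$, $s^3$, $s^2$ expressed linearly in $s$ with polynomial coefficients in $B_1$, to obtain $P_1(X_1)=\alpha(B_1)s+\beta(B_1)$. I then substitute the explicit formula
\[ s=\tfrac{1}{15}\left(\sqrt{81B_1^2+5}+6B_1\right), \]
which turns the claim into an inequality of the form $p(B_1)\sqrt{81B_1^2+5}<q(B_1)$. I will settle this by a sign case analysis on $p$ and $q$ followed by squaring, ending with a one-variable polynomial inequality in $B_1^2$ that can be checked directly, for instance by showing all its coefficients have a fixed sign. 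A useful sanity check is the limit $B_1\to0$: there $s=1/(3\sqrt5)$, and the value should equal $\sec$-free rational numbers giving something negative (roughly $4455/2025-360/45+1<0$).

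Finally, combining the signs in \eqref{HessK(x,0)}, with the leading factor $-C_1C_0^2$ negative, $P_3^9>0$, and $P_1<0$, gives $\operatorname{sign}\Hess(K(X_1,0))=\operatorname{sign}P_2(X_1)=\operatorname{sign}B_1$. Likewise $\operatorname{sign}\Hess(K(0,Y_1))=\operatorname{sign}Q_2(Y_1)=-\operatorname{sign}B_1$. These are exactly (a) and (b).
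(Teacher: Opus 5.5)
Your proposal is correct, and it takes a cleaner route than the paper's.

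The paper substitutes the explicit radical $X_1=\frac{1}{\sqrt{15}}\sqrt{\sqrt{81B_1^2+5}+6B_1}$ directly into each $P_i$. It splits every case by the sign of $B_1$, even for $P_3$, which it treats with a squaring argument. It then squares inequalities that still contain $\sqrt{81B_1^2+5}$ and $(81B_1^2+5)^{3/2}$, removes the radical terms by sign considerations, and dismisses the $Q_i$ as ``completely analogous.''

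You instead reduce modulo the critical-point relation $45s^2=36B_1s+1+9B_1^2$. This makes $P_2(X_1)=108B_1s$ and $P_3=9(x^2-B_1)^2+1>0$ immediate. Your observation that $B_1\mapsto-B_1$, $x\mapsto y$ sends $P_i\mapsto Q_i$ and $X_1\mapsto Y_1$ turns the paper's ``analogous'' into an actual argument. The price is that you must prove $P_1(X_1)<0$ for all real $B_1\neq0$ at once rather than one sign at a time.

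That remaining step does go through as you describe. The reduction gives
\[ \tfrac{25}{24}P_1(X_1)=9B_1(27B_1^2-5)\,s-(1+9B_1^2)(27B_1^2+5), \]
and after substituting $s=\frac{1}{15}\bigl(\sqrt{81B_1^2+5}+6B_1\bigr)$ you get
\[ \tfrac{125}{24}P_1(X_1)=3B_1(27B_1^2-5)\sqrt{81B_1^2+5}-(729B_1^4+450B_1^2+25). \]
This agrees with the paper's expression for $\tfrac{125}{6}P_1(X_1)$ up to the factor $4$, once $(81B_1^2+5)^{3/2}$ is rewritten as $(81B_1^2+5)\sqrt{81B_1^2+5}$. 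The subtracted term is always positive. When the radical term is positive, squaring leaves $820125u^3+232875u^2+21375u+625>0$ with $u=B_1^2$, so all coefficients have a fixed sign, exactly as you anticipated. Your sanity value at $B_1=0$ is also right: $P_1(X_1)=-4.8$.

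Your approach gives a uniform, radical-free final inequality and a rigorous transfer to the $Q_i$. The paper's route needs no structural observation, but it carries many more terms and case distinctions.
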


\begin{proof}
\begin{itemize}
\item $B_1>0$, $P_1(X_1) < 0 \iff \frac{125}{6}\cdot P_1(X_1) < 0 \iff$
\[ -2916 B_1^4 + 81B_1^3\sqrt{81B_1^2 + 5} + 3B_1(81B_1^2 + 5)^{3/2} - 1800B_1^2
\] \[ - 75B_1\sqrt{81B_1^2 + 5} - 100 < 0 \] 
\[ \iff \, 81B_1^3\sqrt{81B_1^2 + 5} + 3B_1(81B_1^2 + 5)^{3/2}  <  100 + 75B_1\sqrt{81B_1^2 + 5} \] \[ + 1800B_1^2 + 2916B_1^4 \] 
\[ \iff \, 8503056 B_1^8 + 1312200 B_1^6 + 66825 B_1^4 + 1125 B_1^2
  < 8503056 B_1^8 \] \[ + 10497600 B_1^6 + 437400B_1^5 \sqrt{81B_1^2 + 5} + 4278825 B_1^4 + 270000B_1^3\sqrt{81B_1^2 + 5} \] \[ + 388125 B_1^2 + 15000 B_1\sqrt{81B_1^2 + 5} + 10000 \] 
Cancelling terms with $B_1^8$ we obtain:
\[ \iff \, 1312200 B_1^6 + 66825 B_1^4 + 1125 B_1^2
  < 10497600 B_1^6 + 437400 B_1^5\sqrt{81B_1^2 + 5} \] \[ + 4278825 B_1^4 + 270000 B_1^3\sqrt{81B_1^2 + 5} + 388125 B_1^2 + 15000 B_1 \sqrt{81B_1^2 + 5} + 10000. \]

In the last inequality, the coefficients of $B_1^6$, $B_1^4$ and $B_1^2$ on the left-hand side are all less than the corresponding coefficients on the right-hand side. Hence, the above inequality is true.

\item $B_1<0$, $P_1(X_1) < 0 \iff \frac{125}{6}\cdot P_1(X_1) < 0 \iff$
\[ -2916 B_1^4 + 81 B_1^3\sqrt{81B_1^2 + 5} + 3 B_1(81B_1^2 + 5)^{3/2} - 1800 B_1^2 \] \[ - 75 B_1\sqrt{81B_1^2 + 5} - 100 < 0 \]
\[ \iff \, 75 (-B_1)\sqrt{81B_1^2 + 5} < 100 + 3 (-B_1)(81B_1^2 + 5)^{3/2} + 1800 B_1^2 + 2916 B_1^4 \] \[ + 81 (-B_1)^3\sqrt{81B_1^2 + 5}  \]
\[ \iff \, 455625 B_1^4 + 28125 B_1^2  <  17006112 B_1^8 - 472392 \sqrt{81B_1^2 + 5} B_1^7 \] \[ - 17496 B_1^5(81B_1^2 + 5)^{3/2} + 11809800 B_1^6 - 291600 B_1^5\sqrt{81B_1^2 + 5} \] \[  - 10800 B_1^3(81B_1^2 + 5)^{3/2} + 3890025 B_1^4 - 16200 B_1^3 \sqrt{81B_1^2 + 5} \] \[ - 600 B_1(81B_1^2 + 5)^{3/2} + 361125 B_1^2 + 10000. \]

When $n$ is an odd integer, the coefficients in the terms involving $B_1^n$ are negative, so the complete factor is positive. The above inequality is equivalent to:
\[ \iff \, 0 <  17006112 B_1^8 - 472392 B_1^7\sqrt{81B_1^2 + 5} - 17496 B_1^5(81B_1^2 + 5)^{3/2} \] \[ + 11809800 B_1^6 - 291600 B_1^5\sqrt{81B_1^2 + 5} - 10800 B_1^3(81B_1^2 + 5)^{3/2} + 3434400 B_1^4 \] \[ - 16200 B_1^3\sqrt{81B_1^2 + 5} - 600 B_1(81B_1^2 + 5)^{3/2} + 333000 B_1^2 + 10000, \]
By the comment above, the inequality holds.

\item If $B_1>0$, then $P_2(X_1)> 0 \iff 5\cdot P_2(X_1)>0 \iff 216 B_1^2 + 36B_1\sqrt{81B_1^2 + 5} > 0$.

\bigskip
\item If $B_1<0$, then $P_2(X_1)< 0 \iff 5\cdot P_2(X_1)<0$
\[ \iff \, 36 B_1 \sqrt{81B_1^2 + 5} < - 216 B_1^2 \]
\[ \iff \, 36(-B_1) \sqrt{81B_1^2 + 5} > 216 B_1^2 \]
\[ \iff \, 104976 B_1^4 + 6480 B_1^2 > 46656 B_1^4 \]
\[ \iff \, 58320 B_1^4 + 6480 B_1^2 > 0 \]
\[ \iff \, 9 B_1^4 + B_1^2 > 0. \]

\bigskip
\item If $B_1>0$, then $P_3(X_1)>0 \iff \frac{25}{6}\cdot P_3(X_1)>0 \iff 27 B_1^2 - 3 B_1 \sqrt{81B_1^2 + 5} + 5 > 0$
\[ \iff \, 27 B_1^2 + 5 > 3 B_1 \sqrt{81B_1^2 + 5} \]
\[ \iff \, 729 B_1^4 + 270 B_1^2 + 25 > 729 B_1^4 + 45 B_1^2 \]
\[ \iff \,  270 B_1^2 + 25 > 45 B_1^2 \]
\[ \iff \, 225 B_1^2 + 25 > 0 \]
\[ \iff \, 9 B_1^2 + 1 > 0. \]

\bigskip
\item If $B_1<0$, then $P_3(X_1)>0 \iff \frac{25}{6}\cdot P_3(X_1)>0 \iff 27 B_1^2 - 3 B_1\sqrt{81B_1^2 + 5} + 5 > 0$.
\end{itemize}

The proofs to determine the signs of $Q_k(Y_1)$ for $k\in \{1,2,3\}$ are completely analogous.
\end{proof}

\begin{lemma*}[\ref{sign_Kxx}]
\begin{enumerate}
\item[(a)] If $B_1>0$, then $K_{xx}(X_1,0)>0$.
\item[(b)] If $B_1<0$, then $K_{xx}(0,Y_1)>0$.
\end{enumerate}
\end{lemma*}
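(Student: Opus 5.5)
The plan is to avoid the full second-derivative expression and exploit the critical-point equations \eqref{critical-points_loci1}--\eqref{critical-points_loci2} to simplify $N_{xx}$. Since $K_{xx}=-C_0N_{xx}/A^5$ and $A>0$, it suffices to show $N_{xx}<0$ at the relevant point. Throughout we use the normalisation $B_2=0$, $\mathtt{b}_\tr=0$, so that
\[
A_x=4x^3+4xy^2-4B_1x,\qquad A_{xx}=12x^2+4y^2-4B_1 .
\]

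\textbf{Part (a), $B_1>0$.} At $(x,0)$ with $x\neq 0$ critical, \eqref{critical-points_loci1} gives $A_x=2A/(3x)$. Substituting this into the formula for $N_{xx}$:
\[
N_{xx}(x,0)=2A^2-8A^2+\tfrac{16}{3}A^2-3x^2AA_{xx}=-A\Big(\tfrac23A+3x^2A_{xx}\Big),
\]
where $A_{xx}(x,0)=12x^2-4B_1$. Put $s=X_1^2>0$. It then remains to show
\[
\tfrac23\Big(\tfrac19+(s-B_1)^2\Big)+36s^2-12B_1s>0 .
\]
Here I use the quartic relation $45s^2=36B_1s+1+9B_1^2$ from Section~\ref{isosceles_triangles} to eliminate $s^2$. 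This gives
\[
36s^2-12B_1s=\tfrac{84}{5}B_1s+\tfrac45(1+9B_1^2),
\]
which is positive because $B_1>0$. The remaining term $\tfrac23 A$ is also positive. Hence $N_{xx}(X_1,0)<0$, and so $K_{xx}(X_1,0)>0$.

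\textbf{Part (b), $B_1<0$.} At $(0,y)$ we have $A_x(0,y)=0$, so only two terms of $N_{xx}$ survive:
\[
N_{xx}(0,y)=A\big(2A-3y^2A_{xx}\big),\qquad A_{xx}(0,y)=4y^2-4B_1,\qquad A(0,y)=\tfrac19+(y^2+B_1)^2 .
\]
Put $t=Y_1^2$. The claim $N_{xx}<0$ becomes
\[
10t^2-16B_1t-2B_1^2-\tfrac29>0 .
\]
Using the relation $45t^2=-36B_1t+1+9B_1^2$ from Section~\ref{isosceles_triangles}, I get $10t^2=\tfrac29+2B_1^2-8B_1t$. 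The left side therefore collapses to $-24B_1t$, which is positive since $B_1<0$ and $t>0$. Hence $K_{xx}(0,Y_1)>0$.

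The only delicate step is the algebraic elimination with the quartic relations. The key idea is to substitute the critical condition $A_x=2A/(3x)$ before expanding. This avoids handling the nested radicals in $X_1$ and $Y_1$ directly, as the SageMath-style approach of Lemma~\ref{sign_Hess} does. If that substitution were to produce a sign error, the fallback is to expand in $s$ (respectively $t$) and reduce modulo the quadratic relation, which gives a linear expression in $s$ whose sign is evident.
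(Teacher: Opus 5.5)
Your proof is correct, and it takes a genuinely different and much more economical route than the paper's.

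The paper substitutes the closed form $X_1=\frac{1}{\sqrt{15}}\sqrt{\sqrt{81B_1^2+5}+6B_1}$ directly into $N_{xx}$. This gives an expression in $B_1$ and $R=\sqrt{81B_1^2+5}$, whose sign the paper establishes by isolating radical terms, squaring repeatedly, and comparing coefficients of very large integers. The final step works only because the two $B_1^{16}$ coefficients coincide: $563327460=9\cdot 62591940$. Part (b) is dismissed as ``completely analogous.''

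You instead impose the first-order condition $A_x=2A/(3x)$ before evaluating. This collapses $N_{xx}(x,0)$ to $-A\bigl(\tfrac23A+3x^2A_{xx}\bigr)$, and at $(0,y)$ the $A_x$-terms vanish identically. You then reduce modulo the quadratic satisfied by $X_1^2$ (resp.\ $Y_1^2$). No radicals ever appear, and you avoid the nonnegativity checks that each squaring step in the paper implicitly requires.

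Your route also proves (b) explicitly, via the clean identity $N_{xx}(0,Y_1)=24B_1Y_1^2A(0,Y_1)$. It exposes structure the paper's computation hides as well. Carrying your reduction in (a) one step further gives $\tfrac23A+3X_1^2A_{xx}=\tfrac89+16B_1X_1^2+8B_1^2=\tfrac83\sqrt{81B_1^2+5}\,X_1^2$. Hence $N_{xx}(X_1,0)=-\tfrac83\sqrt{81B_1^2+5}\,X_1^2A(X_1,0)$, so in fact $K_{xx}(X_1,0)>0$ for every $B_1\neq 0$. The hypothesis $B_1>0$ is needed only for your cruder bound.

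The only thing the paper's brute-force computation offers is an explicit closed form for $N_{xx}(X_1,0)$ in terms of $B_1$, and your reduction reproduces that anyway.
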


\begin{proof}
We prove the case $B_1>0$. Since
$K_{xx}(X_1,0)=-\frac{C_0}{A(X_1,0)^5}N_{xx}(X_1,0)$ and
$A(X_1,0)=(X_1^2 - B_1)^2 + \frac{1}{9} > 0$, the sign of $K_{xx}(X_1,0)$ is opposite to that of $N_{xx}(X_1,0)$. However,
\[  \frac{(125)(5)(27)}{8} N_{xx}(X_1,0)
= -7290B_1^4 - 4293\sqrt{81B_1^2 + 5}B_1^3 + 63 B_1(81B_1^2 + 5)^{3/2} \] \[ - 4500B_1^2 - 465 B_1\sqrt{81B_1^2 + 5} - 250. \]
Hence, $N_{xx}(X_1,0) < 0 \, \iff$
\[ -7290 B_1^4 - 4293 B_1^3\sqrt{81B_1^2 + 5} + 63 B_1(81B_1^2 + 5)^{3/2} - 4500 B_1^2 \] \[ - 465 B_1\sqrt{81B_1^2 + 5} - 250 < 0 \]
\[ \iff \,  63 B_1(81B_1^2 + 5)^{3/2} < 250 + 7290 B_1^4 + 4293 B_1^3\sqrt{81B_1^2 + 5} + 4500 B_1^2 \] \[ + 465 B_1\sqrt{81B_1^2 + 5} \]
\[ \iff \, 2109289329 B_1^8 + 390609135 B_1^6 + 24111675 B_1^4 + 496125 B_1^2
\] \[ < 1545961869 B_1^8 + 62591940 B_1^7\sqrt{81B_1^2 + 5} + 481150935 B_1^6
\] \[ + 45416700 B_1^5\sqrt{81B_1^2 + 5} + 61371675 B_1^4 + 6331500 B_1^3\sqrt{81B_1^2 + 5} \] \[ + 3331125 B_1^2 + 232500 B_1 \sqrt{81B_1^2 + 5} + 62500 \]

The above inequality is equivalent to:
\[ \iff \, 563327460 B_1^8 + 390609135 B_1^6 + 24111675 B_1^4 + 496125 B_1^2
\] \[ < 62591940 B_1^7\sqrt{81B_1^2 + 5} + 481150935 B_1^6 + 45416700 B_1^5 \sqrt{81B_1^2 + 5} \] \[ + 61371675 B_1^4 + 6331500 B_1^3\sqrt{81B_1^2 + 5} + 3331125 B_1^2 \] \[ + 232500 B_1 \sqrt{81B_1^2 + 5} + 62500 \]
\[ \iff \, 563327460 B_1^8 + 24111675 B_1^4 + 496125 B_1^2
< 62591940 B_1^7\sqrt{81B_1^2 + 5} \] \[ + 90541800 B_1^6 + 45416700 B_1^5 \sqrt{81B_1^2 + 5} + 61371675 B_1^4 \] \[ + 6331500 B_1^3\sqrt{81B_1^2 + 5} + 3331125 B_1^2 + 232500 B_1\sqrt{81B_1^2 + 5} + 62500 \]
\[ \iff \, 563327460 B_1^8 + 496125 B_1^2 <
62591940 B_1^7\sqrt{81B_1^2 + 5} + 90541800 B_1^6 \] \[ + 45416700 B_1^5\sqrt{81B_1^2 + 5} + 37260000 B_1^4 + 6331500 B_1^3\sqrt{81B_1^2 + 5} \] \[ + 3331125 B_1^2 + 232500 B_1\sqrt{81B_1^2 + 5} + 62500 \]
\[ \iff \, 563327460 B_1^8  <
62591940 B_1^7\sqrt{81B_1^2 + 5} + 90541800 B_1^6 \] \[ + 45416700 B_1^5\sqrt{81B_1^2 + 5} + 37260000 B_1^4 + 6331500 B_1^3\sqrt{81B_1^2 + 5} \] \[ + 2835000 B_1^2 + 232500 B_1\sqrt{81B_1^2 + 5} + 62500 \]
Taking the square,
\[ \iff \, 317337827190051600 B_1^{16}
<
317337827190051600 B_1^{16} \] \[ + 480109291311294000 B_1^{14} + 11334373826184000 B_1^{13}\sqrt{81B_1^2 + 5} \] \[ + 267902559545130000 B_1^{12} + 12888570904920000 B_1^{11}\sqrt{81B_1^2 + 5}  \] \[ + 69965137671750000 B_1^{10} + 4885879597200000 B_1^9\sqrt{81B_1^2 + 5}  \] \[ + 9880503504750000 B_1^8 + 779261998500000 B_1^7\sqrt{81B_1^2 + 5}  \] \[ + 767091161250000 B_1^6 + 58902592500000 B_1^5\sqrt{81B_1^2 + 5}  \] \[ + 31794018750000 B_1^4 + 2109712500000 B_1^3\sqrt{81B_1^2 + 5} + 624656250000 B_1^2  \] \[ + 29062500000 B_1\sqrt{81B_1^2 + 5} + 3906250000. \]

The last inequality is true since the $B_1^{16}$ terms are identical on both sides of the inequality and cancel.

To summarize,  $N_{xx}(X_1,0)<0$ and hence, $K_{xx}(X_1,0)>0$.

The proof for the case $B_1<0$ is completely analogous.
\end{proof}

\medskip

\subsection*{Acknowledgments} 
The second author is deeply grateful to the Department of Mathematics and the Faculty of Natural Sciences and Mathematics of the University of El Salvador for providing an absolutely nice ambient for being able to develop part of the present article. Special thanks to Dr. Luis Parada, former Dean of the Faculty, M.Sc. Angela Gudelia Portillo, Dean of the Faculty; Dr. Nerys Funes, Vice-Dean of the Faculty; and Dr. Dimas Tejada, Head of the Department, for their financial support and their uniformly continuous encouragement while working on the project.

The authors gratefully acknowledge Dr. Mara Denisse Rueda for extensive editing of the complete article.

The third author was supported by SECIHTI Grant 169113, \emph{Estancias Posdoctorales por México 2025 -- Modalidad Académica}, at CIMAT.

\end{document}